\theoremstyle{plain}
\newtheorem{theorem}{Theorem}[section]
\newtheorem*{quest}{Main Question}
\newtheorem{corollary}[theorem]{Corollary}
\newtheorem{obs}[theorem]{Observation}
\theoremstyle{definition}
\newtheorem{definition}{Definition}
\numberwithin{equation}{section}
\newcommand{\D}{\displaystyle}
\renewcommand{\div}{\operatorname{div_{\mathcal{H}}}}
\newcommand{\eucl}{\operatorname{eucl}}
\newcommand{\ipg}[2]{\langle #1,#2 \rangle_\mathbb{G}}
\newcommand{\iph}[2]{\langle #1,#2 \rangle_\mathbb{H}}
\newcommand{\tp}{\texttt{p}}
\renewcommand{\span}{\operatorname{span}}
\newcommand{\divergence}{\operatorname{div}}
\begin{document}
\title[Generalizations of a Laplacian-Type Equation]{Generalizations of a Laplacian-Type Equation in the Heisenberg Group and a Class of Grushin-Type Spaces}
\author{Thomas Bieske}
\author{Kristen Childers}
\address{Department of Mathematics\\
University of South Florida\\ 
Tampa, FL 33620, USA}
\email{tbieske@math.usf.edu}
\address{Department of Mathematics\\
University of South Florida\\ 
Tampa, FL 33620, USA}
\email{childers@usf.edu}
\subjclass[2000]
{Primary 53C17, 35H20; Secondary 22E25, 17B70}
\keywords{p-Laplace equation, Heisenberg group, Grushin-type plane}
\date{December 17,2011}

\begin{abstract}
In \cite{BGG}, Beals, Gaveau and Greiner find the fundamental solution to a $2$-Laplace-type equation in a class of sub-Riemannian spaces. This solution is related to the well-known fundamental solution to the $\tp$-Laplace equation in Grushin-type spaces \cite{BG} and the Heisenberg group \cite{CDG}. We extend the $2$-Laplace-type equation to a $\tp$-Laplace-type equation. We show that the obvious generalization does not have  desired properties, but rather, our generalization preserves some natural properties. 
\end{abstract}
\maketitle
\section{Introduction and Motivation}
In \cite{BGG}, fundamental solutions to a generalization of the $2$-Laplace equation were found in a wide class of sub-Riemannian spaces.  This class includes some of the spaces in \cite{CDG, HH, BG}. The methodology of \cite{BGG} mixes the geometric properties of the space with the linearity of the 2-Laplace operator.  In this article, we study the generalization of \cite{BGG} and look to extend it to an equation based on the $\tp$-Laplace equation for $1<\tp<\infty$. Because the $\tp$-Laplace equation is nonlinear, we face some technical issues, the first of which is the proper way to generalize the original equation. In Section 3 we discuss the original equations of \cite{BGG} and in Section 4, we find that a seemingly ``natural" generalization is not optimal. In Section 5, we will find a generalization that extends the $2$-Laplace equation while maintaining the connection to the fundamental solutions of the $\tp$-Laplace equation and in Section 6, we explore the limiting case as $\tp \to \infty$. We focus on two specific classes of sub-Riemannian spaces, namely, Grushin-type planes, which are two-dimensional sub-Riemannian spaces lacking a group law, and the Heisenberg group, a sub-Riemannian space possessing a group law. For the sake of completeness, we highlight the key properties of our environments in Section 2.  

This article is based on the Master's Thesis of the second author at the University of South Florida under the direction of the first author. The second author wishes to thank the Department of Mathematics and Statistics at the University of South Florida for the research opportunity that led to both the thesis and this article. 
\section{The Environments}
We concern ourselves with two sub-Riemannian environments, the Heisenberg group and Grushin-type planes, which are 2-dimensional Grushin-type spaces. We will recall the construction of these spaces and then highlight their main properties.
\subsection{The Heisenberg Group}
We begin with $\mathbb{R}^{2n+1}$ using the coordinates $(x_1,x_2,\ldots, x_{2n},z)$ and consider the linearly
independent vector fields $ \{X_i,Z\}, $ where the index $i$ ranges from $1$ to $2n$, defined by  
\begin{eqnarray*}
 	X_i & = & \begin{cases} 
  			\frac{ \D \partial }{\D \partial x_i} - 
			\frac{ \D x_{n+i}} { \D 2}
			 \frac{ \D \partial }{\D \partial z}& 
			            \text{ if  $1 \leq i\leq n$} \\ \\
			\frac{ \D \partial }{ \D \partial x_{i}} + 
			\frac{\D x_{i-n}} { \D 2}
			 \frac{ \D \partial }{\D \partial z} 
			         & \text{ if  $n < i \leq 2n$,}
			\end{cases}\\
 &   &  \\
Z & = & \frac{ \D \partial }{ \D \partial z}  \, \cdot
\end{eqnarray*} 
For $i \leq j$, these vector fields obey the relations 
\begin{equation*}
 	[X_i,X_j] =  \begin{cases} 
  			Z & \text{ if  $j=i+n$} \\ 
			0 & \text{ otherwise}
			\end{cases}
\end{equation*}
and for all $i$, 
\begin{equation*}
[X_i,Z] = 0.
\end{equation*}
We then have a Lie Algebra denoted $h_n$ that decomposes as a direct sum 
\begin{equation*}
h_n = V_1 \oplus V_2 
\end{equation*}
where $V_1$ is
spanned by the $X_i$'s and $V_2$ is spanned by $Z$. 
We endow $h_n$  with an inner
product $\iph{\cdot \,}{\cdot}$ and related norm $\|\cdot\|_{\mathbb{H}}$ so that this basis is
orthonormal. 
The corresponding Lie Group is called the
general Heisenberg group of dimension $n$ and is denoted by $\mathbb{H}^n$. With this choice of vector fields the exponential map can be used to identify elements of $h_n$ and $\mathbb{H}^n$ with each other via
$$\sum_{i=1}^{2n}x_iX_i+ zZ \in h_n \leftrightarrow(x_1, x_2, \ldots, x_{2n},z)\in \mathbb{H}^n.$$
In particular,  for any $p,q$ in $\mathbb{H}^n$, written as 
$p=(x_1, x_2, \ldots, x_{2n},z_1)$ 
and $q=(\hat{x}_1, \hat{x}_2, \ldots, \hat{x}_{2n},z_2)$ the group multiplication law is given by 
\begin{eqnarray*}
p \cdot q = \mathbf{(} x_1+\hat{x}_1, x_2+\hat{x}_2, \ldots, x_{2n}+\hat{x}_{2n}, z_{1}+z_{2} + \frac{1}{2}\sum_{i=1}^{n}
(x_i\hat{x}_{n+i}-x_{n+i}\hat{x}_i) \mathbf{)}.
\end{eqnarray*}

The natural metric on $\mathbb{H}^n$ is the Carnot-Carath\'{e}odory metric  given by 
$$d_C(p,q)= \inf_{\Gamma} \int_{0}^{1} \| \gamma '(t) \|_{\mathbb{H}} dt $$
where the set $ \Gamma $
is the set of all curves $ \gamma $ such that $\gamma (0) = p, \gamma (1) = q$ and $\gamma'(t) \in V_1$.  By Chow's theorem (See, for example,
\cite{BR:SRG}.) any two points
 can be connected by such a curve, which
makes $d_C(p,q)$ a left-invariant metric on $\mathbb{H}^n$.
This metric induces a
homogeneous norm on $\mathbb{H}^n$, denoted $|\cdot|$, by $$|p|=d_C(0,p)$$ and we have the estimate $$|p| \sim \sum_{i=1}^{2n}|x_i|+|z|^{\frac{1}{2}}.$$

Given a smooth function $u:\mathbb{H}^n \to \mathbb{R}$, we define the horizontal gradient by
$$\nabla_0 u = (X_1u,X_2u,\ldots, X_{2n}u),$$ 
the full gradient by $$\nabla u = (X_1u,X_2u,\ldots, X_{2n}u,Zu),$$
and the symmetrized horizontal second
derivative matrix $(D^2u)^{\star}$  by 
\begin{eqnarray*} 
((D^2u)^{\star})_{ij}=\frac{1}{2}(X_iX_ju + X_jX_iu).
\end{eqnarray*}
Additionally, given a vector field $F=\sum_{i=1}^{2n}f_iX_i+f_{2n+1}Z$, we define the Heisenberg divergence of $F$, denoted $\div F$, 
by $$\div F = \sum_{i=1}^{2n}X_if_i.$$ A quick calculation shows that when $f_{2n+1}=0$, we have $$\div F = \textmd{div}_{\eucl}F$$ where 
$\textmd{div}_{\eucl}$ is the standard Euclidean divergence. 
The main operator we are concerned with is the 
horizontal $\tp$-Laplacian for
$1< \tp <\infty$ defined by \begin{eqnarray}
\Delta_\tp u & = & \div(\|\nabla_{0} u\|_{\mathbb{H}}^{\tp-2}\nabla_{0}u)  =  \sum_{i=1}^{2n}X_i\big(\|\nabla_{0} u\|_{\mathbb{H}}^{\tp-2}X_iu\big) \nonumber\\
& = & \sum_{i=1}^{2n}\frac{1}{2}(\tp-2)\|\nabla_{0} u\|_{\mathbb{H}}^{\tp-4}X_i\|\nabla_{0} u\|_{\mathbb{H}}^{2}X_iu+ \sum_{i=1}^{2n}\|\nabla_{0} u\|_{\mathbb{H}}^{\tp-2}X_iX_iu   \label{reductionh}\\
& = &\|\nabla_{0} u\|_{\mathbb{H}}^{\tp-4}\bigg((\tp-2)\frac{1}{2}
\iph{\nabla_{0}\|\nabla_{0} u\|_{\mathbb{H}}^{2}}{\nabla_{0} u}
 +\|\nabla_{0} u\|_{\mathbb{H}}^{2}\sum_{i=1}^{2n}X_iX_iu\bigg).   \nonumber 
\end{eqnarray}
\begin{definition}
 A function $u: \mathbb{H}^n \to \mathbb{R}$ is $C^1_{\mathbb{H}}$ at the point $p$ if $X_iu(p)$ is continuous at $p$ for all $i=1, 2,\ldots, n$ and $u$ is $C^2_{\mathbb{H}}$ at $p$ if $X_iX_ju(p)$ is continuous at $p$ for all $i,j=1,2,\ldots, n$.
\end{definition}
For a more complete treatment of the Heisenberg group, the interested
reader is directed to \cite{BR:SRG}, \cite{B:HG}, \cite{F:SE}, \cite{FS:HSHG}
\cite{G:MS}, \cite{H:CCG}, \cite{K:LGHT}, \cite{St:HA} and the references therein.

\subsection{Grushin-type planes}
The Grushin-type planes differ from the Heisenberg group in that Grushin-type planes lack an algebraic group law. We begin with $\mathbb{R}^{2}$, possessing coordinates $(y_1, y_2)$, $a\in \mathbb{R}$, $c\in \mathbb{R}\setminus\{0\}$ and $n\in \mathbb{N}$.  We use them to make the vector fields:
\begin{equation*}
Y_1  =  \frac{\partial}{\partial y_1} \ \textmd{and}\ \
Y_2  =   c(y_1-a)^n\frac{\partial}{\partial y_2}.
\end{equation*}
For these vector fields, the only (possibly) nonzero Lie bracket is
\begin{equation*}
[Y_1,Y_2]=Y_1Y_2-Y_2Y_1=cn(y_1-a)^{n-1}\frac{\partial}{\partial y_2}.
\end{equation*}
Because $n$ is a natural number, we see that applying the Lie bracket $n$ number of times gives us a nonzero vector at $y_1=a$, namely,
\begin{equation*}
Z\equiv[Y_1,[Y_1,\cdots[Y_1,Y_2]]\cdots]= cn!\frac{\partial}{\partial y_2}.
\end{equation*}
Since $Y_1$ and $Y_2$, as defined above, span $\mathbb{R}^{2}$ when $y_1 \neq a$, and $Y_1$ and $Z$ span $\mathbb{R}^2$ when $y_1=a$, it follows that H\"{o}rmander's condition is satisfied by these vector fields.

We will put a (singular) inner product on $\mathbb{R}^{2}$, denoted $\ipg{\cdot}{\cdot}$,  with related norm $\|\cdot\|_{\mathbb{G}}$, so that the collection $\{Y_{1}, Y_{2}\}$ 
forms an orthonormal basis. We then have a sub-Riemannian space that we will call $g_{n}$, which is also the tangent space to a generalized Grushin-type plane $\mathbb{G}_n$. Points in $\mathbb{G}_n$ will also be denoted by
$p=(y_{1}, y_{2})$.  The Carnot-Carath\'{e}odory distance on $\mathbb{G}_n$ is defined for points $p$ and $q$ as follows:
\begin{eqnarray*}
d_{\mathbb{G}}(p,q)=\inf_{\Gamma}\int \|\gamma'(t)\|_{\mathbb{G}}\;dt.
\end{eqnarray*} 
Here $\Gamma$ is the set of all curves $\gamma$ such that $\gamma(0)=p$, $\gamma(1)=q$ and
\begin{equation*}
\gamma'(t)\in \span\{Y_{1}(\gamma(t)),Y_{2}(\gamma(t))\}.
\end{equation*}
By Chow's theorem, this is an honest metric.  The Carnot-Carath\'{e}odory distance between  $p=(y_1,y_2)$ and $q=(\widehat{y_1},\widehat{y_2})$, can be estimated via Theorem 7.34 in \cite{BR:SRG} by  
\begin{equation*}
d_\mathbb{G}(p,q) \approx |\widehat{y_1}-y_1|+|\widehat{y_2}-y_2|^{\frac{1}{n+1}}.  
\end{equation*}
We shall now discuss calculus on the Grushin-type planes. Given a smooth function $f$ on $\mathbb{G}_n$, we define the horizontal gradient of $f$ as
\begin{equation*}
 \nabla_{0} f(p) = \big(Y_1f(p),Y_2f(p)\big).
\end{equation*}
We also consider the symmetrized $2\times 2$ second-order horizontal derivative matrix with entries given by
\begin{equation*}
((D^2f(p))^\star)_{ij} \frac{1}{2}\big(Y_iY_jf(p)+Y_jY_if(p)\big)
\end{equation*}

Similar to the Heisenberg case, we have the following natural definitions: 
\begin{definition} 
\mbox{}\\
\begin{itemize}
\item The function $f : \mathbb{G}_n \to \mathbb{R}$ is said to be $C^1_{\mathbb{G}}$ at the point $p=(y_1,y_2)$ with $y_1 \neq a$ if $Y_if$ is continuous at $p$ for $i = 1, 2$. Similarly, the function $f$ is $C^2_{\mathbb{G}}$ at $p$ if $Y_iY_jf$ is continuous at $p$ for $i,j=1,2$.
\item The function $f : \mathbb{G}_n \to \mathbb{R}$ is said to be $C^1_{\mathbb{G}}$ at the point $p=(a,y_2)$ if $Y_1f$ is continuous at $p$. Similarly, the function $f$ is $C^2_{\mathbb{G}}$ at $p$ if $Y_1Y_1f$ is continuous at $p$ and, if $n =1$, $Y_1Y_2f$ is continuous at $p$.
\end{itemize}
\end{definition}

Using these derivatives, we consider a key operator on $C^2_{\mathbb{G}}$ functions, namely the \tp-Laplacian for $1<\tp<\infty$, given by
\begin{eqnarray}
\Delta_\tp f & = & \divergence(\|\nabla_{0} f\|_{\mathbb{G}}^{\tp-2}\nabla_{0}f)   =  Y_1\big(\|\nabla_{0} f\|_{\mathbb{G}}^{\tp-2}Y_1f\big)+Y_2\big(\|\nabla_{0} f\|_{\mathbb{G}}^{\tp-2}Y_2f\big)  \nonumber\\
& = & \frac{1}{2}(\tp-2)\|\nabla_{0} f\|_{\mathbb{G}}^{\tp-4}Y_1\|\nabla_{0} f\|_{\mathbb{G}}^{2}Y_1f+\|\nabla_{0} f\|_{\mathbb{G}}^{p-2}Y_1Y_1f  \nonumber\\
&  & \mbox{} +\frac{1}{2}(\tp-2)\|\nabla_{0} f\|_{\mathbb{G}}^{\tp-4}Y_2\|\nabla_{0} f\|_{\mathbb{G}}^{2}Y_2f+\|\nabla_{0} f\|_{\mathbb{G}}^{p-2}Y_2Y_2f \nonumber\\
& = & \frac{1}{2}(\tp-2)\|\nabla_{0} f\|_{\mathbb{G}}^{\tp-4}\big(Y_1\|\nabla_{0} f\|_{\mathbb{G}}^{2}Y_1f+Y_2\|\nabla_{0} f\|_{\mathbb{G}}^{2}Y_2f\big)  \label{reductiong}\\
& & \mbox{} +\|\nabla_{0} f\|_{\mathbb{G}}^{\tp-2}\big(Y_1Y_1f+Y_2Y_2f\big)  \nonumber\\
& = &\|\nabla_{0} f\|_{\mathbb{G}}^{\tp-4}\bigg((\tp-2)\frac{1}{2}
\ipg{\nabla_{0}\|\nabla_{0} f\|_{\mathbb{G}}^{2}}{\nabla_{0} f}+\|\nabla_{0} f\|_{\mathbb{G}}^{2}\big(Y_1Y_1f+Y_2Y_2f\big)\bigg). \nonumber  
\end{eqnarray}
\section{Motivating Results}
\subsection{Grushin-type Planes}
Bieske and Gong \cite{BG} proved the following in the Grushin-type planes.
\begin{theorem}[\cite{BG}]\label{T6}
Let $1<\tp<\infty$ and define $$f(y_1,y_2)=c^2(y_1-a)^{(2n+2)}+(n+1)^2 (y_2-b)^2.$$  
For $\tp \neq n+2$, consider $$\tau_{\tp}=\frac{n+2-\tp}{(2n+2)(1-\tp)}$$
so that in $\mathbb{G}_n\setminus\{(a,b)\}$  we have the well-defined function
\begin{eqnarray*}
\psi_{\tp}=\left\{\begin{array}{cc}
f(y_1,y_2)^{\tau_{\tp}} & \tp \neq n+2 \\
\log f(y_1,y_2) & \tp = n+2.
\end{array}\right.
\end{eqnarray*}
Then, $\Delta_{\tp}\psi_{\tp}=0$ in $\mathbb{G}_n\setminus\{(a,b)\}$.
\end{theorem}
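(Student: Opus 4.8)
The plan is to push the chain rule through the reduced form \eqref{reductiong} of $\Delta_{\tp}$, the whole argument resting on a single algebraic identity for the defining polynomial $f$, namely
\[
\|\nabla_0 f\|_{\mathbb{G}}^2 \;=\; 4(n+1)^2 c^2 (y_1-a)^{2n}\, f .
\]
Once this is in hand, substituting $\psi_{\tp}=f^{\tau_{\tp}}$ turns every quantity appearing in \eqref{reductiong} into a constant times a monomial in $(y_1-a)$ times a power of $f$, and $\Delta_{\tp}\psi_{\tp}=0$ collapses to one linear equation in $\tau_{\tp}$ whose root is the stated value.

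First I would record the elementary derivatives. Since $Y_1=\partial_{y_1}$ and $Y_2=c(y_1-a)^n\partial_{y_2}$, one computes $Y_1f=(2n+2)c^2(y_1-a)^{2n+1}$, $Y_2f=2(n+1)^2 c(y_1-a)^n(y_2-b)$, $Y_1Y_1f=(2n+2)(2n+1)c^2(y_1-a)^{2n}$, and $Y_2Y_2f=2(n+1)^2 c^2(y_1-a)^{2n}$; summing the squares of the first two and using $(2n+2)^2=4(n+1)^2$ gives the displayed identity. In particular $f>0$ on $\mathbb{G}_n\setminus\{(a,b)\}$, so $\psi_{\tp}$ is well defined and smooth off the line $y_1=a$; at a point $(a,y_2)$ with $y_2\neq b$ the polynomial $f$ is still positive and smooth, so $f^{\tau_{\tp}}$ (or $\log f$) is smooth in $y_1$ there and one checks directly from the definition that $\psi_{\tp}$ is $C^2_{\mathbb{G}}$ at such points as well.

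Next I would substitute $\psi_{\tp}=f^{\tau_{\tp}}$ (write $\tau=\tau_{\tp}$) into \eqref{reductiong}. Using $\nabla_0\psi_{\tp}=\tau f^{\tau-1}\nabla_0 f$ and the displayed identity, $\|\nabla_0\psi_{\tp}\|_{\mathbb{G}}^2=4\tau^2(n+1)^2c^2(y_1-a)^{2n}f^{2\tau-1}$; applying the identity a second time, both $\ipg{\nabla_0\|\nabla_0\psi_{\tp}\|_{\mathbb{G}}^2}{\nabla_0\psi_{\tp}}$ and $Y_1Y_1\psi_{\tp}+Y_2Y_2\psi_{\tp}$ reduce to explicit constants times $(y_1-a)^{4n}f^{3\tau-2}$ and $(y_1-a)^{2n}f^{\tau-1}$ respectively, the only derivatives of $f$ ever needed being the four computed above (together with $Y_1$ of $(y_1-a)^{2n}$). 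Feeding these into \eqref{reductiong} and factoring out the common nonzero factor (valid wherever $y_1\neq a$, where also $\tau\neq 0$ since $\tp\neq n+2$), the bracketed expression becomes a nonzero constant multiple of $2(n+1)(\tp-1)\tau+(n+2-\tp)$; since $\tau_{\tp}=\dfrac{n+2-\tp}{(2n+2)(1-\tp)}$ is precisely the root of this, $\Delta_{\tp}\psi_{\tp}=0$ at every point with $y_1\neq a$. The borderline $\tp=n+2$ is identical with $\psi_{\tp}=\log f$ and $\nabla_0\psi_{\tp}=f^{-1}\nabla_0 f$: there the bracket reduces to a nonzero constant multiple of $n+2-\tp$, which vanishes.

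The step I expect to be the main obstacle is the degenerate line $y_1=a$ (with $y_2\neq b$), where $\nabla_0 f=0$, hence $\nabla_0\psi_{\tp}=0$, and the factor $\|\nabla_0 f\|_{\mathbb{G}}^{\tp-4}$ in \eqref{reductiong} is not directly meaningful when $\tp<4$. To handle it I would return to the unreduced form $\Delta_{\tp}\psi_{\tp}=\divergence\!\big(\|\nabla_0\psi_{\tp}\|_{\mathbb{G}}^{\tp-2}\nabla_0\psi_{\tp}\big)$ and show that the horizontal vector field $V=\|\nabla_0\psi_{\tp}\|_{\mathbb{G}}^{\tp-2}\nabla_0\psi_{\tp}$ extends across $y_1=a$ as a continuous, indeed $C^1_{\mathbb{G}}$, field vanishing on that line: near $y_1=a$ one has $\|\nabla_0\psi_{\tp}\|_{\mathbb{G}}\sim|y_1-a|^{n}$ while $f^{\tau-1}$ stays smooth and bounded away from $0$, so the two components of $V$ vanish to orders $n\tp+1$ and $n(\tp-1)$ in $(y_1-a)$, enough that $\divergence V$ extends continuously across the line; being continuous and equal to $0$ on the dense open set $\{y_1\neq a\}$, it is $0$ there as well. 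Combining the two regimes yields $\Delta_{\tp}\psi_{\tp}=0$ throughout $\mathbb{G}_n\setminus\{(a,b)\}$.
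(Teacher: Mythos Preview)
The paper does not contain a proof of this theorem: it is quoted verbatim from \cite{BG} as a motivating result (Section~3.1), so there is no in-paper argument to compare against. Your approach is the standard one for this kind of radial $\tp$-harmonic function and is essentially how the result is proved in \cite{BG}: the single identity $\|\nabla_0 f\|_{\mathbb{G}}^2 = 4(n+1)^2 c^2 (y_1-a)^{2n} f$ is exactly the mechanism that makes the computation close, and your reduction of the bracketed expression in \eqref{reductiong} to a constant multiple of $2(n+1)(\tp-1)\tau + (n+2-\tp)$ is correct (I checked the coefficients). The treatment of the degenerate set $\{y_1=a\}$ via the unreduced divergence form is the right idea; you might tighten the wording slightly by noting that for $\tp>1$ the vector field $V$ has each component equal to a smooth function of $(y_1,y_2)$ times $(y_1-a)^{m}$ with $m\ge n(\tp-1)>0$, so $Y_1V_1$ and $Y_2V_2$ extend continuously to zero there, but the substance of your argument is sound.
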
 
In the Grushin-type planes, Beals, Gaveau and Greiner \cite{BGG} extend this equation as shown in the following theorem. 
\begin{theorem}[BGG]\label{MG}
Let $L \in \mathbb{R}$. Consider the following quantities,
\begin{equation*}
\alpha = \frac{-n}{(2n+2)}(1+L) \ \ \textmd{and}\ \ \beta  =  \frac{-n}{(2n+2)}(1-L)
\end{equation*}
where $L\in\mathbb{R}$.
We use these constants with the functions
\begin{eqnarray*}
g(y_1,y_2) & = & c(y_1-a)^{n+1}+i(n+1)(y_2-b)\\ 
h(y_1,y_2) & = & c(y_1-a)^{n+1}-i(n+1)(y_2-b)
\end{eqnarray*}
to define our main function $f(y_1,y_2)$, given by
\begin{eqnarray*}
f(y_1,y_2) & = & g(y_1,y_2)^{\alpha}h(y_1,y_2)^{\beta}
\end{eqnarray*}
Then, $\Delta_{2}f+iL[Y_1,Y_2]f=0$ in $\mathbb{G}_n\setminus\{(a,b)\}$ .
\end{theorem}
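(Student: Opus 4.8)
The plan is to express both operators in the equation purely through the two building blocks $g$ and $h$ and to exploit a ``null gradient'' phenomenon that follows from their special algebraic form. First I would compute the horizontal derivatives: writing $w:=c(n+1)(y_1-a)^n$, one checks at once that
\[
Y_1 g=Y_1 h=w,\qquad Y_2 g=iw,\qquad Y_2 h=-iw,
\]
since $Y_2=c(y_1-a)^n\,\partial_{y_2}$; moreover $w$ is a function of $y_1$ alone, so $Y_2 w=0$ while $Y_1 w=w':=cn(n+1)(y_1-a)^{n-1}$. Each of $g,h$ vanishes only at $(a,b)$ (one needs both $(y_1-a)^{n+1}=0$ and $y_2-b=0$), so near every point of $\mathbb{G}_n\setminus\{(a,b)\}$ one may fix single-valued smooth branches of $g^\alpha$ and $h^\beta$; since the identity to be proved is local, this suffices.

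Next I would record the crucial structural fact that the ``squared horizontal gradients'' of $g$ and of $h$ vanish identically: $(Y_1 g)^2+(Y_2 g)^2=w^2+(iw)^2=0$, and likewise for $h$. Consequently the chain rule for $\Delta_2=Y_1 Y_1+Y_2 Y_2$ collapses to $\Delta_2(g^\alpha)=\alpha\,g^{\alpha-1}\,\Delta_2 g$, and since $\Delta_2 g=Y_1 w+i\,Y_2 w=w'$ (respectively $\Delta_2 h=w'$) we get
\[
\Delta_2(g^\alpha)=\alpha\,w'\,g^{\alpha-1},\qquad \Delta_2(h^\beta)=\beta\,w'\,h^{\beta-1}.
\]
Feeding these into the Leibniz identity $\Delta_2(uv)=v\,\Delta_2 u+u\,\Delta_2 v+2\ipg{\nabla_{0}u}{\nabla_{0}v}$ with $u=g^\alpha$, $v=h^\beta$, and using $(Y_1 g)(Y_1 h)+(Y_2 g)(Y_2 h)=w^2+w^2=2w^2$, produces
\[
\Delta_2 f=g^{\alpha-1}h^{\beta-1}\Bigl(w'(\alpha h+\beta g)+4\alpha\beta\,w^2\Bigr).
\]

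For the bracket term, I would use $[Y_1,Y_2]=cn(y_1-a)^{n-1}\,\partial_{y_2}$ and $\partial_{y_2}f=i(n+1)\,g^{\alpha-1}h^{\beta-1}(\alpha h-\beta g)$ to obtain $[Y_1,Y_2]f=i\,w'\,g^{\alpha-1}h^{\beta-1}(\alpha h-\beta g)$. Adding, the factor $g^{\alpha-1}h^{\beta-1}$ comes out of everything and
\[
\Delta_2 f+iL[Y_1,Y_2]f=g^{\alpha-1}h^{\beta-1}\Bigl(w'\bigl(\alpha(1-L)h+\beta(1+L)g\bigr)+4\alpha\beta\,w^2\Bigr).
\]
Now I would substitute $\alpha=\tfrac{-n}{2n+2}(1+L)$ and $\beta=\tfrac{-n}{2n+2}(1-L)$: both $\alpha(1-L)$ and $\beta(1+L)$ equal $\tfrac{-n}{2n+2}(1-L^2)$, so the parenthesis becomes $\tfrac{-n}{2n+2}(1-L^2)\,w'(g+h)+4\alpha\beta\,w^2$; since $g+h=2c(y_1-a)^{n+1}$ this first piece is $-c^2 n^2(1-L^2)(y_1-a)^{2n}$, and $4\alpha\beta\,w^2=c^2 n^2(1-L^2)(y_1-a)^{2n}$, so the two cancel and $\Delta_2 f+iL[Y_1,Y_2]f=0$ on $\mathbb{G}_n\setminus\{(a,b)\}$. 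As a check, $L=0$ forces $\alpha=\beta=\tau_2=\tfrac{-n}{2n+2}$ and $f=(gh)^{\tau_2}=\psi_2$, recovering Theorem~\ref{T6} for $\tp=2$.

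The manipulations are lengthy but elementary; the whole argument rests on two cancellations. The first is the pointwise identity $(Y_1 g)^2+(Y_2 g)^2=0$, the source of the ``almost harmonic'' formulas above, which is where the structure of $g$ and $h$ — annihilated respectively by $Y_1+iY_2$ and $Y_1-iY_2$ — really enters. The second is the arithmetic identity $\tfrac{-n}{2n+2}(1-L^2)\,w'(g+h)=-4\alpha\beta\,w^2$, valid precisely for the stated values of $\alpha$ and $\beta$. I expect this last cancellation, together with keeping the complex bookkeeping straight through the second-derivative computations, to be the main point requiring care.
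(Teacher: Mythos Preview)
Your argument is correct. The paper itself does not supply a proof of this statement: Theorem~\ref{MG} is quoted from Beals--Gaveau--Greiner \cite{BGG} as a motivating result, so there is no in-paper proof to compare against. Your approach---exploiting the null relations $(Y_1g)^2+(Y_2g)^2=0$ and $(Y_1h)^2+(Y_2h)^2=0$ to collapse the chain rule, then reducing the cancellation to the single arithmetic identity $\tfrac{-n}{2n+2}(1-L^2)\,w'(g+h)+4\alpha\beta\,w^2=0$---is clean and efficient, and it matches in spirit the brute-force derivative computations the paper carries out later for the $\tp$-Laplace generalizations (cf.\ the proof of Theorem~\ref{negg}, where the same first-order quantities $Y_if$ appear). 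One small notational caveat: when you invoke the Leibniz identity $\Delta_2(uv)=v\,\Delta_2 u+u\,\Delta_2 v+2\ipg{\nabla_{0}u}{\nabla_{0}v}$ for complex-valued $u,v$, the pairing $\ipg{\cdot}{\cdot}$ must be read as the bilinear form $(Y_1u)(Y_1v)+(Y_2u)(Y_2v)$ rather than the sesquilinear inner product the paper defines for real tangent vectors; your computation uses it that way, so the result is unaffected.
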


\subsection{The Heisenberg Group}
Capogna, Danielli, and Garofalo \cite{CDG} proved the following theorem.
\begin{theorem}[\cite{CDG}]\label{T8}
Let $1<\tp<\infty$. In $\mathbb{H}^n \setminus {\{0}\}$, let $$u(x_1,x_2,\ldots, x_{2n},z)=\Bigg(\bigg(\sum_{i=1}^{2n}x_i^2\bigg)^2+16z^2\Bigg).$$ For $\tp \neq 2n+2$, let $$\eta_{\tp}=\frac{2n+2-\tp}{4(1-\tp)},$$ and let
\begin{eqnarray*}
\zeta_{\tp}=\left\{\begin{array}{cc}
u(x_1,x_2,\ldots, x_{2n},z)^{\eta_{\tp}} & \tp \neq 2n+2 \\
\log u(x_1,x_2,\ldots, x_{2n},z) & \tp = 2n+2.
\end{array}\right.
\end{eqnarray*}
Then we have $\Delta_{\tp}\zeta_{\tp}=0$ in $\mathbb{H}^n \setminus {\{0}\}$.
\end{theorem}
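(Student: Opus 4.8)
The plan is to verify $\Delta_{\tp}\zeta_{\tp}=0$ by a direct computation organized around the reduction formula \eqref{reductionh}. Throughout, write $r^{2}=\sum_{i=1}^{2n}x_i^{2}$, so that $u=r^{4}+16z^{2}$ (the fourth power of the Kor\'{a}nyi gauge) and $\zeta_{\tp}=u^{\eta_{\tp}}$ when $\tp\ne 2n+2$, while $\zeta_{\tp}=\log u$ otherwise. The first block of the argument records the basic derivatives of $u$: from the explicit form of the vector fields, $X_iu=4r^{2}x_i-16x_{n+i}z$ for $1\le i\le n$ and $X_iu=4r^{2}x_i+16x_{i-n}z$ for $n<i\le 2n$. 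The key observation is that when these are assembled into $\|\nabla_{0}u\|_{\mathbb{H}}^{2}=\sum_i(X_iu)^{2}$ and into $\iph{\nabla_{0}r^{2}}{\nabla_{0}u}=\sum_i 2x_iX_iu$, the $z$-linear cross terms cancel in pairs because $X_i$ and $X_{n+i}$ enter with opposite signs; combined with a short computation of the second derivatives this yields the clean identities $\|\nabla_{0}u\|_{\mathbb{H}}^{2}=16r^{2}u$, $\ \sum_{i=1}^{2n}X_iX_iu=8(n+2)r^{2}$, and $\iph{\nabla_{0}r^{2}}{\nabla_{0}u}=8r^{4}$.

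Next I would push these through the chain rule. With $\eta=\eta_{\tp}$ one has $\nabla_{0}\zeta_{\tp}=\eta\,u^{\eta-1}\nabla_{0}u$, hence $\|\nabla_{0}\zeta_{\tp}\|_{\mathbb{H}}^{2}=16\eta^{2}r^{2}u^{2\eta-1}$; differentiating again gives $\sum_{i}X_iX_i\zeta_{\tp}=8\eta(2\eta+n)r^{2}u^{\eta-1}$, and one more use of the product rule gives $\iph{\nabla_{0}\|\nabla_{0}\zeta_{\tp}\|_{\mathbb{H}}^{2}}{\nabla_{0}\zeta_{\tp}}=128\eta^{3}(4\eta-1)r^{4}u^{3\eta-2}$. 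Substituting into \eqref{reductionh}, both terms in the bracket carry the common factor $128\eta^{3}r^{4}u^{3\eta-2}$, so (away from the zero set of $\nabla_{0}\zeta_{\tp}$, and with $\eta\ne 0$) the equation $\Delta_{\tp}\zeta_{\tp}=0$ collapses to the single scalar condition
\[
\tfrac{1}{2}(\tp-2)(4\eta-1)+2\eta+n=0 .
\]
Plugging in $\eta=\eta_{\tp}=\frac{2n+2-\tp}{4(1-\tp)}$ and clearing the common denominator $2(1-\tp)$, the numerator simplifies to $2n(\tp-1)$, so the left-hand side equals $-n+n=0$; this settles $\tp\ne 2n+2$. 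For the borderline exponent $\tp=2n+2$ (where $\eta_{\tp}=0$) the same scheme with $\zeta_{\tp}=\log u$ gives $\sum_iX_iX_i\zeta_{\tp}=8nr^{2}u^{-1}$ and $\iph{\nabla_{0}\|\nabla_{0}\zeta_{\tp}\|_{\mathbb{H}}^{2}}{\nabla_{0}\zeta_{\tp}}=-128r^{4}u^{-2}$, and \eqref{reductionh} then produces a bracket proportional to $-64(\tp-2)+128n=-128n+128n=0$.

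It remains to deal with the locus where the chain-rule manipulation above is not classically licensed, i.e. where $\nabla_{0}\zeta_{\tp}$ vanishes. Since $\|\nabla_{0}\zeta_{\tp}\|_{\mathbb{H}}^{2}=16\eta^{2}r^{2}u^{2\eta-1}$, this is exactly the punctured vertical axis $\{x_1=\cdots=x_{2n}=0,\ z\ne 0\}$, on which $u=16z^{2}>0$ and $\zeta_{\tp}$ is smooth but $\|\nabla_{0}\zeta_{\tp}\|_{\mathbb{H}}^{\tp-2}$ need not be. On the complement of this measure-zero set the identity $\Delta_{\tp}\zeta_{\tp}=0$ holds pointwise by the computation above, and I expect promoting this to a statement valid on all of $\mathbb{H}^{n}\setminus\{0\}$ to be the only genuine obstacle: the natural resolution is to interpret $\Delta_{\tp}\zeta_{\tp}=0$ in the weak (or viscosity) sense and to check that $\|\nabla_{0}\zeta_{\tp}\|_{\mathbb{H}}^{\tp-2}\nabla_{0}\zeta_{\tp}$ extends continuously by $0$ across the axis, so that the axis is removable; alternatively one may simply invoke \cite{CDG}.
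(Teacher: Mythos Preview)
The paper does not supply its own proof of this theorem: it is quoted verbatim as a result of Capogna--Danielli--Garofalo \cite{CDG} and used only as motivation, so there is no in-paper argument to compare against. Your direct verification via the reduction \eqref{reductionh} is correct in all details---the three ``clean identities'' $\|\nabla_{0}u\|_{\mathbb{H}}^{2}=16r^{2}u$, $\sum_i X_iX_iu=8(n+2)r^{2}$, $\iph{\nabla_{0}r^{2}}{\nabla_{0}u}=8r^{4}$ check out, as do the chain-rule consequences and the final scalar reduction $\tfrac12(\tp-2)(4\eta-1)+2\eta+n=0$---and this is exactly the standard computation one finds in \cite{CDG}. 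Your closing paragraph on the punctured vertical axis is apt: for $\tp\ge 2$ the field $\|\nabla_{0}\zeta_{\tp}\|_{\mathbb{H}}^{\tp-2}\nabla_{0}\zeta_{\tp}$ extends continuously by $0$ across $\{r=0\}$ so the divergence-form equation holds there as well, while for $1<\tp<2$ the equation should indeed be read weakly (and the removability follows since the axis has Hausdorff dimension $2<2n+2$); either way this is a routine point and invoking \cite{CDG} is entirely appropriate.
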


In the Heisenberg Group, Beals, Gaveau, and Greiner \cite{BGG} extend this equation as shown in the following theorem. 

\begin{theorem}[BGG]\label{MH}
Let $L \in \mathbb{R}$. Consider the following constants,
\begin{equation*}
\eta = \frac{L-1}{2} \ \ \textmd{and}\ \ \tau  =  \frac{-(L+1)}{2} 
\end{equation*}
together with the functions,
\begin{eqnarray*}
v(x_1,x_2,\ldots, x_{2n},z) & = & \big(\sum_{j=1}^{2n} x_j^2\big) - 4 i z \\
w(x_1,x_2,\ldots, x_{2n},z) & = & \big(\sum_{j=1}^{2n}x_j^2\big) + 4 i z
\end{eqnarray*}
to define our main function, $u(x_1,x_2,\ldots, x_{2n},z)$ given by
$$u(x_1,x_2,\ldots, x_{2n},z)= v(x_1,x_2,\ldots, x_{2n},z)^\eta w(x_1,x_2,\ldots, x_{2n},z)^\tau.$$ 
Then, in $\mathbb{H}^n\setminus\{0\}$, we have $\Delta_{2}u+iL\sum_{j=1}^{n}[X_j,X_{j+n}]u=0$.
\end{theorem}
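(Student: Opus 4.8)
The plan is to verify the identity $\Delta_2 u + iL\sum_{j=1}^n [X_j, X_{j+n}]u = 0$ by direct computation, exploiting the product structure $u = v^\eta w^\tau$ together with the near-harmonicity of $v$ and $w$ under the relevant horizontal operators. First I would record the basic first- and second-order derivatives: since $\sum_j x_j^2$ depends only on the $x$-variables and $z$ appears linearly, one computes $X_j v = 2x_j - 4i \cdot \tfrac{1}{2}(\pm x_{\cdot})$ type terms, more precisely $X_j v = 2x_j - 2i\,\sigma(j)x_{\rho(j)}$ where $\rho$ swaps the blocks $\{1,\dots,n\}$ and $\{n+1,\dots,2n\}$ and $\sigma$ records the sign from the definition of $X_j$; similarly for $w$ with $+2i$ in place of $-2i$. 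A key preliminary observation is that $\|\nabla_0 v\|_{\mathbb{H}}^2$, $\|\nabla_0 w\|_{\mathbb{H}}^2$, and $\iph{\nabla_0 v}{\nabla_0 w}$ all simplify dramatically: the cross terms telescope so that $\|\nabla_0 v\|_{\mathbb{H}}^2 = 4\sum x_j^2 - (\text{something that vanishes})$ and in fact $\iph{\nabla_0 v}{\nabla_0 w}$ is proportional to $\sum x_j^2$. I would also compute $\sum_i X_iX_i v$ and $\sum_i X_iX_i w$ (each is a constant, $2\cdot 2n = 4n$ for the $\sum x_j^2$ part, the $z$-part contributing nothing since $X_iX_i z$ sums to zero after the antisymmetry) and the commutator action $\sum_j [X_j,X_{j+n}]v = \sum_j Z v = 2n \cdot Zv$, with $Zv = -4i$ and $Zw = 4i$.

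Next I would expand $\Delta_2 u$ using the Leibniz-type rule for $\Delta_2$ acting on a product: writing $u = v^\eta w^\tau$, we have $\nabla_0 u = v^\eta w^\tau(\eta \tfrac{\nabla_0 v}{v} + \tau \tfrac{\nabla_0 w}{w})$, and $\Delta_2 u = \sum_i X_iX_i u$ expands into terms involving $X_iX_i v$, $X_iX_i w$, $\|\nabla_0 v\|^2$, $\|\nabla_0 w\|^2$, and the cross term $\iph{\nabla_0 v}{\nabla_0 w}$, each multiplied by the appropriate power of $v$, $w$ and combinatorial factor in $\eta, \tau$. Using the homogeneity-type identities from the previous paragraph — in particular that $\|\nabla_0 v\|_{\mathbb{H}}^2$ is a multiple of $v$ (this is the crucial algebraic miracle, analogous to $g$ and $h$ being "harmonic coordinates" in the Grushin case) — the whole expression collapses to $v^{\eta-1}w^{\tau-1}$ times a quadratic polynomial in $\eta$ and $\tau$ with coefficients built from $n$ and the constants $\pm 4i$. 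Then I would add $iL\sum_j[X_j,X_{j+n}]u = iL \cdot 2n \cdot v^{\eta}w^{\tau}(\eta \tfrac{Zv}{v} + \tau\tfrac{Zw}{w}) = iL\cdot 2n\cdot v^{\eta-1}w^{\tau-1}(\eta\, Zv\, w + \tau\, Zw\, v)$ and combine.

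Finally, I would substitute $\eta = \tfrac{L-1}{2}$ and $\tau = \tfrac{-(L+1)}{2}$ and check that the resulting polynomial coefficient vanishes identically. The natural bookkeeping is to separate the coefficient of $v^{\eta}w^{\tau-1}$ (terms with an extra factor of $v$) from that of $v^{\eta-1}w^{\tau}$; each must vanish separately, and each gives a linear relation in $L$ that is solved precisely by the stated $\eta,\tau$ — indeed $\eta + \tau = -1$ and $\eta - \tau = L$ are the two identities one expects to use. The main obstacle I anticipate is purely computational: correctly handling the signs and index-swapping in $X_j$ (the split definition for $j \le n$ versus $j > n$) so that the cross terms in $\|\nabla_0 v\|_{\mathbb{H}}^2$ and $\iph{\nabla_0 v}{\nabla_0 w}$ genuinely telescope, and verifying that $\sum_j [X_j, X_{j+n}] = \sum_j Z = nZ$ is applied with the right multiplicity. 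Once the identity $\|\nabla_0 v\|_{\mathbb{H}}^2 = 4\,w$ (and symmetrically $\|\nabla_0 w\|_{\mathbb{H}}^2 = 4\,v$, $\iph{\nabla_0 v}{\nabla_0 w} = 4\sum x_j^2$, up to the exact constants) is established, the rest is forced algebra and the choice of $\eta, \tau$ is exactly what kills the remaining coefficients.
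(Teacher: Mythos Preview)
The paper does not actually prove this theorem: it is stated in Section~3 as a known result of Beals, Gaveau, and Greiner (reference [BGG]) and quoted without proof as motivation for the paper's later generalizations. So there is no argument in the paper to compare your proposal against.

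Your direct-computation strategy is the natural one and does go through, but one of your key intermediate identities is mis-stated and would derail the algebra if you used it as written. You claim $\|\nabla_0 v\|_{\mathbb{H}}^2 = 4w$ ``up to constants''; in fact, for $v = \sum_j x_j^2 - 4iz$ one computes $X_k v = 2(x_k + i x_{n+k})$ for $k\le n$ and $X_{n+k}v = 2(x_{n+k} - i x_k) = -2i(x_k + i x_{n+k})$, so $(X_k v)^2 + (X_{n+k}v)^2 = 0$ for every $k$, giving $\sum_j (X_j v)^2 = 0$ identically (and likewise for $w$). The only surviving bilinear piece is $\sum_j (X_j v)(X_j w) = 8\sum_j x_j^2 = 4(v+w)$. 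This makes your Leibniz expansion cleaner than you anticipate: the $\eta(\eta-1)$ and $\tau(\tau-1)$ terms drop out entirely, and together with $\Delta_2 v = \Delta_2 w = 4n$ and $\sum_{j=1}^n[X_j,X_{j+n}] = nZ$ (not $2nZ$, as you first wrote before correcting yourself) one arrives at
\[
\Delta_2 u + iL\,nZu \;=\; 4u\Big(\frac{\eta\bigl(2\tau + n(1+L)\bigr)}{v} + \frac{\tau\bigl(2\eta + n(1-L)\bigr)}{w}\Big),
\]
which vanishes exactly when $\eta = \tfrac{n(L-1)}{2}$ and $\tau = -\tfrac{n(L+1)}{2}$. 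Note this reveals that the exponents printed in the paper's statement of the theorem are missing a factor of $n$; compare with the ``core Heisenberg function'' in the paper's Section~3.4, whose exponents at $\tp=2$ do carry the correct $n$-dependence.
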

\begin{obs}
In $\mathbb{G}_n\setminus\{(a,b)\}$, we have when $\tp=2$, $$f_2(y_1,y_2)=\big(c^2(y_1-a)^{2n+2}+(n+1)^2(y_2-b)^2\big)^{\displaystyle{-\frac{n}{2n+2}}}$$ solves $$\Delta_2f_2=0.$$
Also, $$\widehat{f_L}(y_1,y_2)=g(y_1,y_2)^{-\frac{n}{2n+2}(1+L)}h(y_1,y_2)^{-\frac{n}{2n+2}(1-L)}$$ where $$g(y_1,y_2)=c(y_1-a)^{n+1}+i(n+1)(y_2-b)$$ and $$h(y_1,y_2)=c(y_1-a)^{n+1}-i(n+1)(y_2-b),$$
solves $$\Delta_2\widehat{f_L}+iL[Y_1,Y_2]\widehat{f_L}=0.$$
Notice that the equations and solutions coincide when $L=0$. That is, $$\widehat{f_0}=f_2.$$ 

Similarly, in $\mathbb{H}^n\setminus\{0\}$, $$u_2((x_1,x_2,\ldots, x_{2n},z)= \Big(\big(\sum_{i=1}^{2n}x_i^2\big)^2+16z^2\Big)^{-\frac{2n}{4}}$$ solves $$\Delta_2u_2=0.$$
Also, $$\widehat{u_L}(x_1,x_2,\ldots, x_{2n},z)=v(x_1,x_2,\ldots, x_{2n},z)^{\frac{L-1}{2}}w(x_1,x_2,\ldots, x_{2n},z)^{-\frac{(L+1)}{2}}$$ where $$v(x_1,x_2,\ldots, x_{2n},z)=\big(\sum_{j=1}^{2n}x_j^2\big)-4iz$$ and $$w(x_1,x_2,\ldots, x_{2n},z)=\big(\sum_{j=1}^{2n}x_j^2\big)+4iz$$ solves $$\Delta_2\widehat{u_L}+iL\sum_{j=1}^n[X_j,X_{j+n}]\widehat{u_L}=0.$$
Again, the equations and solutions coincide when $L=0$. So again, $$\widehat{u_0}=u_2.$$
\end{obs}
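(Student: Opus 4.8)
The plan is to read off this Observation directly from Theorems~\ref{T6}, \ref{MG}, \ref{T8}, and \ref{MH} by specializing parameters, together with one elementary algebraic identity; no new computation with the vector fields $Y_i$ or $X_i$ is needed. In the Grushin setting I would first put $\tp=2$ in Theorem~\ref{T6}, so that $\tau_2=\frac{n+2-2}{(2n+2)(1-2)}=-\frac{n}{2n+2}$ and hence $\psi_2=f(y_1,y_2)^{-n/(2n+2)}=f_2$ with $f(y_1,y_2)=c^2(y_1-a)^{2n+2}+(n+1)^2(y_2-b)^2$; Theorem~\ref{T6} then gives $\Delta_2 f_2=0$ on $\mathbb{G}_n\setminus\{(a,b)\}$. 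The statement about $\widehat{f_L}$ is just Theorem~\ref{MG} rewritten: with $\alpha=-\frac{n}{2n+2}(1+L)$ and $\beta=-\frac{n}{2n+2}(1-L)$ one has $g^\alpha h^\beta=\widehat{f_L}$, so $\Delta_2\widehat{f_L}+iL[Y_1,Y_2]\widehat{f_L}=0$.

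The only point with genuine content is the coincidence at $L=0$. Setting $L=0$ collapses the two exponents to $\alpha=\beta=-\frac{n}{2n+2}$, so $\widehat{f_0}=(gh)^{-n/(2n+2)}$, and expanding the product of the two complex-conjugate factors gives
\[
gh=\big(c(y_1-a)^{n+1}\big)^2+\big((n+1)(y_2-b)\big)^2=c^2(y_1-a)^{2n+2}+(n+1)^2(y_2-b)^2=f(y_1,y_2),
\]
which is real and strictly positive on $\mathbb{G}_n\setminus\{(a,b)\}$, so the indicated real power is well defined there and $\widehat{f_0}=f_2$. On the operator side the term $iL[Y_1,Y_2]\widehat{f_L}$ vanishes when $L=0$, so the equation $\Delta_2\widehat{f_L}+iL[Y_1,Y_2]\widehat{f_L}=0$ becomes $\Delta_2\widehat{f_0}=0$; thus both the equation and its solution degenerate to the $\tp=2$ objects of Theorem~\ref{T6}.

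The Heisenberg assertions follow by the same mechanism. Putting $\tp=2$ in Theorem~\ref{T8} gives $\eta_2=\frac{2n+2-2}{4(1-2)}=-\frac{2n}{4}$, so $\zeta_2=u^{-2n/4}=u_2$ with $u=\big(\sum_{i=1}^{2n}x_i^2\big)^2+16z^2$ and $\Delta_2 u_2=0$; Theorem~\ref{MH} supplies the equation for $\widehat{u_L}=v^{(L-1)/2}w^{-(L+1)/2}$. At $L=0$ the exponents collapse, the term $iL\sum_{j=1}^n[X_j,X_{j+n}]\widehat{u_L}$ drops out, and the conjugate-pair identity $vw=\big(\sum_{j=1}^{2n}x_j^2\big)^2+16z^2$ (again real and positive off the origin, and equal to the base of $u_2$) identifies $\widehat{u_0}$ with $u_2$, solving $\Delta_2\widehat{u_0}=0$.

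I do not expect any real obstacle here: the Observation is essentially a bookkeeping consequence of the four quoted theorems. The only things requiring care are the arithmetic that produces the correct exponents when $\tp=2$, and the elementary remark that the products $gh$ and $vw$ of complex-conjugate factors are real, nonnegative, and vanish only at the deleted point, which is exactly what legitimizes raising them to a negative real power on the punctured space; the exponent bookkeeping in the Heisenberg $L=0$ case is the step I would double-check most carefully.
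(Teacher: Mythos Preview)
Your proposal is correct and is exactly what the paper intends: the Observation is stated without proof in the paper, precisely because it is meant to be read off from Theorems~\ref{T6}, \ref{MG}, \ref{T8}, and \ref{MH} by specializing $\tp=2$ or $L=0$, together with the elementary factorizations $gh=c^2(y_1-a)^{2n+2}+(n+1)^2(y_2-b)^2$ and $vw=\big(\sum_j x_j^2\big)^2+16z^2$. There is no separate argument in the paper to compare against.

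Your instinct to double-check the Heisenberg $L=0$ exponents is well placed. With the exponents of Theorem~\ref{MH} as written, $\eta=(L-1)/2$ and $\tau=-(L+1)/2$, one obtains $\widehat{u_0}=(vw)^{-1/2}$, whereas $u_2=(vw)^{-n/2}$; these coincide only for $n=1$. The formulas in the paper's ``Core Heisenberg Function'' subsection carry the expected factor of $n$ (via $\frac{2n+2-\tp}{4(1-\tp)}$), so the discrepancy is an artifact of how Theorem~\ref{MH} is stated rather than a defect in your argument.
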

We then ask the following:
\begin{quest}
Can we extend this relationship in both $\mathbb{G}_n\setminus\{(a,b)\}$ and in $\mathbb{H}^n\setminus\{0\}$ for all $\tp,\ 1<\tp < \infty$?  That is, can we find an operator $\Phi(\tp,L)$ so that $\phi_{\tp,L}$ is a solution to $\Phi(\tp,L)\phi_{\tp,L}=0,$ for all $\tp,\ 1<\tp < \infty$ and for all $L \in \mathbb{R}$.  In addition, we should have 
\begin{equation}
\Phi(\tp,0)  = \Delta_{\tp}  \label{condition2}
\end{equation}
and
\begin{eqnarray} \begin{cases}
\Phi(2,L) = \Delta_2+iL[Y_1,Y_2] & \textmd{in the Grushin case,}\\ 
\Phi(2,L)= \Delta_2+iL\sum_{j=1}^{n}[X_j,X_{j+n}] & \textmd{in the Heisenberg case.} 
\end{cases}\label{condition1}
\end{eqnarray}
Additionally, we would like to have $\phi_{2,L}$ is the solution from \cite{BGG} and $\phi_{\tp, 0}$ is the solution from \cite{BG} in the Grushin case or \cite{CDG} in the Heisenberg case.  
\end{quest}

In order to answer this question, we first look at a good candidate for what the solution should be in each environment.
\subsection{The Core Grushin Function}
For the Grushin-type planes, we consider the following for $\tp\neq n+2$:
\begin{eqnarray*}
\alpha & = & \frac{n+2-\tp}{(1-\tp)(2n+2)}(1+L)\\
\beta & = & \frac{n+2-\tp}{(1-\tp)(2n+2)}(1-L)
\end{eqnarray*}
where $L\in\mathbb{R}$.
We use these constants with the functions
\begin{eqnarray*}
g(y_1,y_2) & = & c(y_1-a)^{n+1}+i(n+1)(y_2-b)\\ 
h(y_1,y_2) & = & c(y_1-a)^{n+1}-i(n+1)(y_2-b)
\end{eqnarray*}
to define our main function $f_{\tp,L}(y_1,y_2)$, given by
\begin{eqnarray*}
f_{\tp,L}(y_1,y_2) =\left\{\begin{array}{cc}
g(y_1,y_2)^{\alpha}h(y_1,y_2)^{\beta} & \tp \neq n+2 \\
\log\big(g^{1+L}h^{1-L}\big) & \tp = n+2.
\end{array}\right.
\end{eqnarray*}

From Theorems \ref{T6} and \ref{MG}, we have $f_{\tp,L}(y_1,y_2)$ solves 
$$\Delta_{\tp}f_{\tp,L}+iL[Y_1,Y_2]f_{\tp,L}=0$$ in $\mathbb{G}_n\setminus\{(a,b)\}$ when $\tp$ is arbitrary and $L=0$ or when $\tp=2$ and for all $L$.

\subsection{The Core Heisenberg Function}
In the Heisenberg group, for $\tp\neq 2n+2$, we consider the following quantities:
\begin{eqnarray*}
\eta & = & \frac{2n+2-\tp}{4(1-\tp)}(1-L)\\
\tau & = & \frac{2n+2-\tp}{4(1-\tp)}(1+L)
\end{eqnarray*}
where $L\in\mathbb{R}$.
We use these constants with the functions
\begin{eqnarray*}
v(x_1,x_2,\ldots,x_{2n},z) & = & \big(\sum_{j=1}^{2n}x_1^2\big)-4iz\\ 
w(x_1,x_2,\ldots,x_{2n},z) & = & \big(\sum_{j=1}^{2n}x_1^2\big)+4iz
\end{eqnarray*}
to define our main function $u_{\tp,L}(x_1,x_2,\ldots,x_{2n},z)$, given by
\begin{eqnarray*}
u_{\tp,L}(x_1,x_2,\ldots,x_{2n},z) =\left\{\begin{array}{cc}
v(x_1,x_2,\ldots,x_{2n},z)^{\eta}w(x_1,x_2,\ldots,x_{2n},z)^{\tau} & \tp \neq 2n+2 \\
\log\big(v^{1-L}w^{1+L}\big) & \tp = 2n+2.
\end{array}\right.
\end{eqnarray*}

From Theorems \ref{T8} and \ref{MH}, we have $u_{\tp,L}(x_1,x_2,\ldots,x_{2n},z)$ solves 
$$\Delta_{\tp}u_{\tp,L}+iL\sum_{j=1}^{2n}[X_j,X_{j+n}]u_{\tp,L}=0$$ in $\mathbb{H}^n\setminus\{0\}$ when $\tp$ is arbitrary and $L=0$ or when $\tp=2$ and for all $L$.
\section{A Negative Result}
A ``natural" generalization of the equation $\Delta_2\phi+iL[Z_1,Z_2]\phi$ is $\Delta_{\tp}\phi+iL[Z_1,Z_2]\phi$ where $Z_i=Y_i$ in the Grushin-type planes and $Z_i=X_i$ in the Heisenberg group $\mathbb{H}^1$.  We now consider this equation in each of our environments. 
We will suppress the subscripts on the function $f$ and on $\|\cdot\|$ for the upcoming formal computations.
\begin{theorem} \label{negg}
Let $f_{\tp,L}, \alpha,\beta$ be as in the previous section.  Let $\tp \neq n+2$ and let $L \in \mathbb{R}$ with $L \neq \pm 1$.  Then in $\mathbb{G} \setminus \{(a,b)\}$
\begin{eqnarray*}
& & \Delta_\tp f_{\tp,L}+iL(\tp-1) \|\nabla_0 f_{\tp,L} \|^{\tp-2}[Y_1,Y_2]f_{\tp,L}  \\
& &\mbox{} -\|\nabla_0 f_{\tp,L} \|^{\tp-2} \frac{L^2}{L^2-1}(-4)\left(\frac{(\tp-2)(1+n\tp)}{2+n-\tp}\right)(Y_2g^{\alpha})(Y_2h^{\beta})=0.
\end{eqnarray*} 
In particular, $\Delta_\tp f_{\tp,L} +i L [Y_1,Y_2]f_{\tp,L}$ need not be zero. 
\end{theorem}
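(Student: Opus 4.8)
The plan is a direct formal computation organized around the holomorphic-type structure of $g$ and $h$. First I would record the elementary identities $Y_1 g = Y_1 h = c(n+1)(y_1-a)^n =: A$, $Y_2 g = iA$, $Y_2 h = -iA$, together with $Y_1 A = cn(n+1)(y_1-a)^{n-1} =: B$, $Y_2 Y_2 g = Y_2 Y_2 h = 0$, and $[Y_1,Y_2]\phi = cn(y_1-a)^{n-1}\partial_{y_2}\phi$ for any smooth $\phi$. Introducing the first-order operators $D_{\pm} = Y_1 \mp iY_2$, these say $D_- g = 2A$, $D_+ g = 0$, $D_+ h = 2A$, $D_- h = 0$, so $g$ behaves as a ``holomorphic'' and $h$ as an ``antiholomorphic'' coordinate for this structure. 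The chain rule then gives at once $D_- f_{\tp,L} = 2\alpha A\, g^{\alpha-1}h^{\beta}$ and $D_+ f_{\tp,L} = 2\beta A\, g^{\alpha}h^{\beta-1}$, and, since $\|\nabla_0 f_{\tp,L}\|^2 = (Y_1 f_{\tp,L})^2 + (Y_2 f_{\tp,L})^2 = (D_- f_{\tp,L})(D_+ f_{\tp,L})$, the gradient norm collapses to the single monomial
\[
\|\nabla_0 f_{\tp,L}\|^2 = 4\alpha\beta\, A^2\, g^{2\alpha-1}h^{2\beta-1}.
\]

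Next I would assemble $\Delta_\tp f_{\tp,L}$ from the reduction formula \eqref{reductiong}, whose two ingredients are $Y_1Y_1 f_{\tp,L} + Y_2Y_2 f_{\tp,L}$ and $\ipg{\nabla_0\|\nabla_0 f_{\tp,L}\|^2}{\nabla_0 f_{\tp,L}}$. Using the factorizations $Y_1Y_1 + Y_2Y_2 = D_-D_+ - i[Y_1,Y_2]$ and $(Y_1F)(Y_1G)+(Y_2F)(Y_2G) = \tfrac12\big((D_-F)(D_+G)+(D_+F)(D_-G)\big)$ — where the relations $D_+ g = D_- h = 0$ annihilate every cross term — both ingredients reduce, after the chain rule, to
\[
Y_1Y_1 f_{\tp,L} + Y_2Y_2 f_{\tp,L} = g^{\alpha-1}h^{\beta-1}\big(B(\alpha h + \beta g) + 4\alpha\beta A^2\big)
\]
and a similarly clean expression for the inner product. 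Feeding these into \eqref{reductiong} I obtain
\[
\Delta_\tp f_{\tp,L} = 4\alpha\beta\,\|\nabla_0 f_{\tp,L}\|^{\tp-4}\,A^2\,g^{3\alpha-2}h^{3\beta-2}\Big((\tp-1)B(\alpha h + \beta g) + A^2\big(4(\tp-1)\alpha\beta - (\tp-2)(\alpha+\beta)\big)\Big).
\]
Separately I would compute $[Y_1,Y_2]f_{\tp,L} = iB\, g^{\alpha-1}h^{\beta-1}(\alpha h - \beta g)$ and $(Y_2 g^{\alpha})(Y_2 h^{\beta}) = \alpha\beta A^2 g^{\alpha-1}h^{\beta-1}$, and rewrite all three terms of the asserted identity over the common factor $4\alpha\beta\,\|\nabla_0 f_{\tp,L}\|^{\tp-4}A^2 g^{3\alpha-2}h^{3\beta-2}$.

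With everything over that common factor, the asserted identity reduces to a scalar identity among $B(\alpha h + \beta g)$, $B(\alpha h - \beta g)$ and $A^2$; the first two combine into $(\tp-1)B\big[(\alpha h + \beta g) - L(\alpha h - \beta g)\big]$. This is where the special exponents enter: substituting $\alpha = \tau_\tp(1+L)$, $\beta = \tau_\tp(1-L)$ with $\tau_\tp = \tfrac{n+2-\tp}{(1-\tp)(2n+2)}$ gives $\alpha+\beta = 2\tau_\tp$, $\alpha\beta = \tau_\tp^2(1-L^2)$, and, crucially, $(\alpha h + \beta g) - L(\alpha h - \beta g) = \tau_\tp(1-L^2)(g+h) = 2\tau_\tp(1-L^2)c(y_1-a)^{n+1}$, which turns the $B$-term into a scalar multiple of the $A^2$-term (the $(y_1-a)$-degrees agree, since $(n-1)+(n+1) = 2n = \deg A^2$). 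What remains is a polynomial identity in $n,\tp,L$ that I would verify by expansion, using the defining relation for $\tau_\tp$ in the form $2(n+1)(\tp-1)\tau_\tp = \tp - n - 2$ together with $(1-L^2)\tfrac{L^2}{L^2-1} = -L^2$. Two built-in checks: the extra term vanishes at $L = 0$, where the identity collapses to $\Delta_\tp f_{\tp,0} = 0$ (Theorem~\ref{T6}), and at $\tp = 2$, where it collapses to $\Delta_2 f_{2,L} + iL[Y_1,Y_2]f_{2,L} = 0$ (Theorem~\ref{MG}). The final assertion is then immediate: the displayed identity writes $\Delta_\tp f_{\tp,L} + iL(\tp-1)\|\nabla_0 f_{\tp,L}\|^{\tp-2}[Y_1,Y_2]f_{\tp,L}$ as $(Y_2 g^\alpha)(Y_2 h^\beta)$ times a coefficient that is nonzero for generic $\tp \neq n+2$ and $L \neq \pm1$, so neither this weighted combination nor (on substituting it back) the naive combination $\Delta_\tp f_{\tp,L} + iL[Y_1,Y_2]f_{\tp,L}$ can vanish identically on $\mathbb G_n \setminus\{(a,b)\}$.

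The hard part is organizational rather than conceptual: the $\tp$-Laplacian expansion generates many terms, and without the right intermediate variables — the holomorphic derivatives $D_{\pm}$ and the radial factors $A$, $B$ — the cross terms proliferate. The key simplification is that $D_+ g = D_- h = 0$, i.e.\ $\|\nabla_0 f_{\tp,L}\|^2$ is a single monomial in $g,h,(y_1-a)$; everything after that is careful but routine algebra, the only delicate point being the degree matching that makes the final scalar relation a genuine polynomial identity in $n,\tp,L$ rather than a pointwise coincidence.
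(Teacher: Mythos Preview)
There is a genuine gap. Your central simplification
\[
\|\nabla_0 f_{\tp,L}\|^2 \;=\; (Y_1 f_{\tp,L})^2 + (Y_2 f_{\tp,L})^2 \;=\; (D_- f_{\tp,L})(D_+ f_{\tp,L}) \;=\; 4\alpha\beta\,A^2\,g^{2\alpha-1}h^{2\beta-1}
\]
is not what $\|\nabla_0 f\|^2$ means here. Because $f_{\tp,L}=g^\alpha h^\beta$ is complex-valued, the weight in $\Delta_\tp$ is the Hermitian norm
\[
\|\nabla_0 f\|^2 \;=\; (Y_1 f)\,\overline{Y_1 f} + (Y_2 f)\,\overline{Y_2 f},
\]
and the paper makes this explicit by first computing $\overline{Y_1 f}$ and $\overline{Y_2 f}$ (using $\overline g = h$, $\overline h = g$). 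That calculation gives
\[
\|\nabla_0 f\|^2 \;=\; 4c^2(n+1)^2(y_1-a)^{2n}(gh)^{\alpha+\beta-1}(\alpha^2+\beta^2),
\]
which is \emph{real} (note $gh=|g|^2$), has the symmetric exponent $\alpha+\beta-1$ on both $g$ and $h$, and carries the factor $\alpha^2+\beta^2$, not $\alpha\beta$. Your bilinear expression $(Y_1 f)^2+(Y_2 f)^2$ is a different, generally non-real quantity, so the identity $\|\nabla_0 f\|^2=(D_-f)(D_+f)$ and everything built on it (the ``single monomial'' form of the weight, the clean inner-product formula $(Y_1F)(Y_1G)+(Y_2F)(Y_2G)=\tfrac12[(D_-F)(D_+G)+(D_+F)(D_-G)]$ applied to $\nabla_0\|\nabla_0 f\|^2$, the final scalar reduction) are all computing the wrong object.

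The $D_\pm$ framework is a good bookkeeping device for the \emph{second}-order part $Y_1Y_1f+Y_2Y_2f$, and for $[Y_1,Y_2]f$ and $(Y_2 g^\alpha)(Y_2 h^\beta)$ your formulas are fine. But the first-order weight $\|\nabla_0 f\|^{\tp-2}$ does not factor through $D_\pm$ in the way you use; with the correct Hermitian norm the derivatives $Y_i\|\nabla_0 f\|^2$ involve $gh$ as a real block (this is exactly what the paper's intermediate formulas show), and the subsequent cancellation leading to $\Lambda=0$ uses the combination $\alpha^2+\beta^2$, not $\alpha\beta$. You would need to redo the computation with the Hermitian norm; the paper's proof is precisely that direct calculation.
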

\begin{proof}
Recall from equation \eqref{reductiong}, 
\begin{eqnarray*}
\Delta_\tp f & = & \|\nabla_{0} f\|^{\tp-4}\bigg((\tp-2)\frac{1}{2}
\Big(Y_1\|\nabla_0f\|^2(Y_1f)+Y_2\|\nabla_0f\|^2(Y_2f)\Big)\bigg) \\
&& \mbox{}+\|\nabla_{0} f\|^{2}\big(Y_1Y_1f+Y_2Y_2f\big)\bigg).
\end{eqnarray*}
Thus to show 
\begin{eqnarray*}
& & \Delta_\tp f+iL(\tp-1) \|\nabla_0 f \|^{\tp-2}[Y_1,Y_2]f  \\
& &\mbox{} -\|\nabla_0 f \|^{\tp-2} \frac{L^2}{L^2-1}(-4)\left(\frac{(\tp-2)(1+n\tp)}{2+n-\tp}\right)(X_2g^{\alpha})(X_2h^{\beta})=0,
\end{eqnarray*} 
we will consider
\begin{eqnarray*}
&& \|\nabla_{0} f\|^{\tp-4}\Big((\tp-2)\frac{1}{2}\Big(Y_1\|\nabla_0f\|^2(Y_1f)+Y_2\|\nabla_0f\|^2(Y_2f)\Big)\\
&&\mbox{}+\|\nabla_{0} f\|^{2}\big(Y_1Y_1f+Y_2Y_2f\big)\Big)+iL(\tp-1) \|\nabla_0 f \|^{\tp-2}[Y_1,Y_2]f  \\
&&\mbox{}-\|\nabla_0 f \|^{\tp-2} \frac{L^2}{L^2-1}(-4)\left(\frac{(\tp-2)(1+n\tp)}{2+n-\tp}\right)(X_2g^{\alpha})(X_2h^{\beta})\\
&=&\|\nabla_{0} f\|^{\tp-4}\Bigg(\Big((\tp-2)\frac{1}{2}\Big(Y_1\|\nabla_0f\|^2(Y_1f)+Y_2\|\nabla_0f\|^2(Y_2f)\Big)\\
&&\mbox{}+\|\nabla_{0} f\|^{2}\big(Y_1Y_1f+Y_2Y_2f\big)\Big)+iL(\tp-1) \|\nabla_0 f \|^{2}[Y_1,Y_2]f  \\
&&\mbox{}-\|\nabla_0 f \|^{2} \frac{L^2}{L^2-1}(-4)\left(\frac{(\tp-2)(1+n\tp)}{2+n-\tp}\right)(X_2g^{\alpha})(X_2h^{\beta})\Bigg).
\end{eqnarray*} 
We need only show that
\begin{eqnarray*}
&&\Big((\tp-2)\frac{1}{2}\Big(Y_1\|\nabla_0f\|^2(Y_1f)+Y_2\|\nabla_0f\|^2(Y_2f)\Big)\\
&&\mbox{}+\|\nabla_{0} f\|^{2}\big(Y_1Y_1f+Y_2Y_2f\big)\Big)+iL(\tp-1) \|\nabla_0 f \|^{2}[Y_1,Y_2]f  \\
&&\mbox{}-\|\nabla_0 f \|^{2} \frac{L^2}{L^2-1}(-4)\left(\frac{(\tp-2)(1+n\tp)}{2+n-\tp}\right)(X_2g^{\alpha})(X_2h^{\beta})=0.
\end{eqnarray*} 
To show this, we will require the following quantities. We compute for $\tp \neq n+2$,
\begin{eqnarray*}
Y_1f & = & c(n+1)(y_1-a)^{n}g^{\alpha-1}h^{\beta-1}(\alpha h+\beta g)\\
\overline{Y_1 f} & = & c(n+1)(y_1-a)^{n}g^{\beta-1}h^{\alpha-1}(\alpha g+\beta h)\\
Y_2f & = & ic(n+1)(y_1-a)^{n}g^{\alpha-1}h^{\beta-1}(\alpha h-\beta g)\\
\overline{Y_2 f} & = & ic(n+1)(y_1-a)^{n}g^{\beta-1}h^{\alpha-1}(-\alpha g+\beta h)\\
\|\nabla_{0} f\|^{2} & = &  4c^2(n+1)^2(y_1-a)^{2n}g^{\alpha+\beta-1}h^{\alpha+\beta-1}(\alpha^2+\beta^2)\\
Y_1(Y_1f) & = & c(n+1)(y_1-a)^{n-1}g^{\alpha-2}h^{\beta-2}\bigg(ngh(\alpha h+\beta g) +  \\
&& \mbox{}c(n+1)(y_1-a)^{n+1}\Big((\alpha h+\beta g)\big((\alpha-1)h-(\beta-1)g)\\
&& \mbox{} +gh(\alpha+\beta)\Big)
\bigg)\\
Y_2(Y_2f) & = & -c^2(n+1)^2(y_1-a)^{2n}g^{\alpha-2}h^{\beta-2} \times\\
&& \mbox{}\Big((\alpha h+\beta g)\big((\alpha-1)h-(\beta-1)g)-gh(\alpha+\beta)\Big).\\
\end{eqnarray*}
To proceed, we shall also need the following,
\begin{eqnarray*}
Y_1\|\nabla_{0}f\|^{2} & = & 2^2c^2(n+1)^2(\alpha ^2+\beta ^2)(y_1-a)^{2n-1}g^{\alpha+\beta-2}h^{\alpha+\beta-2} \\
&& \mbox{} \times \big(ngh+c^2(n+1)(\alpha+\beta-1)(y_1-a)^{2n+2} \big) \\
\end{eqnarray*}
and
\begin{eqnarray*}
Y_2\|\nabla_{0}f\|^{2} & = & 2^2c^3(n+1)^4(\alpha ^2+\beta ^2)(y_1-a)^{3n}(y_2-b)\\
&& \mbox{} \times(\alpha+\beta-1)g^{\alpha+\beta-2}h^{\alpha+\beta-2}. \\
\end{eqnarray*}
Using the above quantities, we now compute
\begin{eqnarray*}
\lefteqn{Y_1\|\nabla_0f\|^2(Y_1f)+Y_2\|\nabla_0f\|^2(Y_2f) =} & & \\
&&\mbox{}2^2c^3(n+1)^3(\alpha^2+\beta^2)(y_1-a)^{3n-1}g^{2\alpha+\beta-3}h^{\alpha+2\beta-3}\\
&&\mbox{}\times\Big((\alpha h+\beta g)\big(ngh+c^2(n+1)(\alpha+\beta-1)(y_1-a)^{2n+2}\big)\\
&&\mbox{}+ic(n+1)^2(y_1-a)^{n+1}(y_2-b)(\alpha+\beta-1)(\alpha h -\beta g)\Big)
\end{eqnarray*}
and
\begin{eqnarray*}
\lefteqn{\|\nabla_0f\|^2(Y_1Y_1f+Y_2Y_2f) = } & & \\ &&2c^3(n+1)^3(\alpha^2+\beta^2)(y_1-a)^{3n-1}g^{2\alpha+\beta-3}h^{\alpha+2\beta-3}\\
&&\mbox{}\times\Big(ngh(\alpha h+\beta g)+4c(n+1)(y_1-a)^{n+1}gh(\alpha\beta)\Big).
\end{eqnarray*}
We can then calculate 
\begin{eqnarray}
&&(\tp-2)\frac{1}{2}\Big(Y_1\|\nabla_0f\|^2(Y_1f)+Y_2\|\nabla_0f\|^2(Y_2f)\Big) \nonumber\\
&& \mbox{}+\|\nabla_{0} f\|^{2}\big(Y_1Y_1f+Y_2Y_2f\big) \nonumber\\
&=& \mbox{} 2c^3(n+1)^3(\alpha ^2+\beta^2)(y_1-a)^{3n-1}g^{2\alpha+\beta-3}h^{\alpha+2\beta-3} \nonumber\\
&& \mbox{}\times\Big((p-1)(\alpha h+ \beta g)ngh \nonumber\\
&&\mbox{}+(p-2)c^2(n+1)(y_1-a)^{2n+2}(\alpha+\beta-1)(\alpha h+\beta g) \nonumber\\
&&\mbox{}+ic(p-2)(n+1)^2(y_1-a)^{n+1}(y_2-b)(\alpha +\beta -1)(\alpha h -\beta g)\nonumber\\
&&\mbox{}+2^2c(n+1)(y_1-a)^{n+1}\alpha \beta gh  \nonumber
\Big).
\end{eqnarray}
\vspace{2 mm}

We will need the above quantity with
\begin{eqnarray*}
iL(\tp-1) \|\nabla_0 f \|^{2}[Y_1,Y_2]f & = & -2L(p-1)c^3n(n+1)^3(\alpha^2+\beta^2) \\
&&\mbox{} \\
&&\mbox{}\times(y_1-a)^{3n-1}(\alpha h-\beta g)g^{2\alpha+\beta-2}h^{\alpha+2\beta-2}
\end{eqnarray*}
and
\begin{eqnarray*}
\lefteqn{\|\nabla_0 f \|^{2} \frac{L^2}{L^2-1}(-4)\left(\frac{(\tp-2)(1+n\tp)}{2+n-\tp}\right)(X_2g^{\alpha})(X_2h^{\beta}) = } && \\
\\
& & \mbox{} \\
& & \mbox{} -2^3c^4(n+1)^4(y_1-a)^{4n}(\alpha^2+\beta^2)\alpha \beta g^{2\alpha+\beta-2} h^{\alpha+2\beta-2}\\
& & \mbox{} \\
& & \mbox{}\times \frac{(L^2)(\tp-2)(1-n\tp)}{(L^2-1)(2+n-\tp)}.
\end{eqnarray*}
We let $\Lambda$ be defined as
\begin{eqnarray*}
\Lambda & = & \Big((\tp-2)\frac{1}{2}\Big(Y_1\|\nabla_0f\|^2(Y_1f)+Y_2\|\nabla_0f\|^2(Y_2f)\Big) \\
& & \mbox{} \\
& & \mbox{}+\|\nabla_{0} f\|^{2}\big(Y_1Y_1f+Y_2Y_2f\big)\Big)+iL(\tp-1) \|\nabla_0 f \|^{2}[Y_1,Y_2]f  \\
& & \mbox{} \\
& & \mbox{}-\|\nabla_0 f \|^{2} \frac{L^2}{L^2-1}(-4)\left(\frac{(\tp-2)(1+n\tp)}{2+n-\tp}\right)(X_2g^{\alpha})(X_2h^{\beta}).
\end{eqnarray*}
 We then compute
\begin{eqnarray*} 
 \Lambda & = & 2c^3(n+1)^3(\alpha ^2+\beta^2)(y_1-a)^{3n-1}g^{2\alpha+\beta-3}h^{\alpha+2\beta-3} \\
&& \mbox{}\times\Big((p-1)(\alpha h+ \beta g)ngh \\
&&\mbox{}+(p-2)c^2(n+1)(y_1-a)^{2n+2}(\alpha+\beta-1)(\alpha h+\beta g) \\
&&\mbox{}+ic(p-2)(n+1)^2(y_1-a)^{n+1}(y_2-b)(\alpha +\beta -1)(\alpha h -\beta g)\\
&&\mbox{}+2^2c(n+1)(y_1-a)^{n+1}\alpha \beta gh
\Big)\\
&& \mbox{}+\Big(-2L(p-1)c^3n(n+1)^3(\alpha^2+\beta^2) \\
&&\mbox{}\times(y_1-a)^{3n-1}(\alpha h-\beta g)g^{2\alpha+\beta-2}h^{\alpha+2\beta-2}\Big)\\
&& \mbox{}-\Big(-2^3c^4(n+1)^4(y_1-a)^{4n}(\alpha^2+\beta^2)\alpha \beta g^{2\alpha+\beta-2} h^{\alpha+2\beta-2}\\
&&\mbox{}\times \frac{(L^2)(\tp-2)(1-n\tp)}{(L^2-1)(2+n-\tp)}\Big)\\
&=&2c^3(n+1)^3(\alpha^2+\beta^2)(y_1-a)^{3n-1}g^{2\alpha+\beta-3}h^{\alpha+2\beta-3} \\
&& \mbox{}\Bigg(ngh(p-1)\big((\alpha h+ \beta g)-L(\alpha h- \beta g)\big)\\
&&\mbox{}2^2c(n+1)(y_1-a)^{n+1}\alpha \beta gh\left(1+ \frac{(L^2)(\tp-2)(1-n\tp)}{(L^2-1)(2+n-\tp)} \right)\\
&&\mbox{}c(p-2)(n+1)(y_1-a)^{n+1}(\alpha +\beta -1)gh(\alpha+\beta)\Bigg)\\
&=& 2c^3(n+1)^3(\alpha^2+\beta^2)(y_1-a)^{3n-1}g^{2\alpha+\beta-3}h^{\alpha+2\beta-3} \\
& & \mbox{}\times \Bigg(n(p-1)(h+g)\left(\frac{(-1+L^2)(2+n-\tp)}{2(1+n)(\tp-1)} \right)\\
&& \mbox{}+c(n+1)(y_1-a)^{n+1}\left(-\frac{n(-1+L^2)(2+n-\tp)}{(n+1)^2} \right)\Bigg)\\
&=& 2c^3(n+1)^3(\alpha^2+\beta^2)(y_1-a)^{3n-1}g^{2\alpha+\beta-3}h^{\alpha+2\beta-3} \\
& & \mbox{}\times \Bigg( c(y_1-a)^{n+1}\left(\frac{n(-1+L^2)(2+n-\tp)}{(n+1)}\right)\\
&& \mbox{}+c(y_1-a)^{n+1}\left(-\frac{n(-1+L^2)(2+n-\tp)}{(n+1)}\right)\Bigg)=0.
\end{eqnarray*} 
\end{proof}

\subsection{The Heisenberg group}

Similar to the Grushin case (Theorem \ref{negg}), in the Heisenberg group $\mathbb{H}^1$, Theorems \ref{T8} and \ref{MH} lead us to hypothesize that $u_{\tp, L}(x_1,x_2,x_3)$ should solve $$\Delta_{\tp}u_{\tp, L}+iL[X_1,X_2]u_{\tp, L}=0$$ for $\tp$ arbitrary and $L \in \mathbb{R}$

Unfortunately, we discover this is not the case. Again, we will suppress the subscripts on the function $u$ and on $\|\cdot\|$ throughout our calculations. 

\begin{theorem} \label{negh}
Let $\tp \neq 4$. Then in $\mathbb{H}^1 \setminus \{0\},$ $$\Delta_{\tp}u+iL[X_1,X_2]u$$ need not be zero.
\end{theorem}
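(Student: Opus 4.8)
The plan is to follow the proof of Theorem~\ref{negg} line for line. Fix $n=1$, so that $v = x_1^2 + x_2^2 - 4iz$, $w = x_1^2 + x_2^2 + 4iz$, $u = u_{\tp,L} = v^{\eta}w^{\tau}$ with $\eta = \frac{4-\tp}{4(1-\tp)}(1-L)$ and $\tau = \frac{4-\tp}{4(1-\tp)}(1+L)$, and $[X_1,X_2] = Z = \partial_z$. The first step is to record the building blocks. Using $X_1 z = -\tfrac{x_2}{2}$ and $X_2 z = \tfrac{x_1}{2}$, one finds that $X_1 v, X_2 v, X_1 w, X_2 w$ are constant multiples of $x_1 \pm i x_2$; in particular they satisfy the ``holomorphic-type'' relations $X_2 v = -i\,X_1 v$, $X_2 w = i\,X_1 w$, while $vw = (x_1^2+x_2^2)^2 + 16z^2$. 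From these I would assemble $X_1 u$ and $X_2 u$, the horizontal norm $\|\nabla_0 u\|^2 = (X_1 u)^2 + (X_2 u)^2$, the second horizontal derivatives $X_1 X_1 u$ and $X_2 X_2 u$, the bracket $[X_1,X_2]u = \partial_z u$, and the derivatives $X_1\|\nabla_0 u\|^2$, $X_2\|\nabla_0 u\|^2$. Exactly as in the Grushin case, each of these factors as a common monomial $v^{a}w^{b}$ times a constant, a power of $x_1^2+x_2^2$, and a low-degree polynomial in $v,w$.

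Next I would substitute these into the reduction formula \eqref{reductionh} for $\Delta_\tp u$, add the term $iL(\tp-1)\|\nabla_0 u\|^{\tp-2}[X_1,X_2]u$, divide out the common factor $\|\nabla_0 u\|^{\tp-4}$ together with the common monomial in $v,w$, and collect the remaining bracketed polynomial according to its dependence on $v$, $w$, and $x_1^2+x_2^2$. Using the algebraic identities forced by the definitions of $\eta$ and $\tau$ (which control $\eta+\tau$, $\eta\tau$, and $\eta+\tau-1$ as polynomials in $L$ with the factor $L^2-1$ appearing), the bracket collapses, producing an identity of the shape
\begin{equation*}
\Delta_\tp u + iL(\tp-1)\|\nabla_0 u\|^{\tp-2}[X_1,X_2]u - \|\nabla_0 u\|^{\tp-2}\,\frac{L^2}{L^2-1}\,C(\tp)\,(X_2 v^{\eta})(X_2 w^{\tau}) = 0,
\end{equation*}
where $C(\tp)$ is an explicit rational function of $\tp$ (the analogue of $(-4)\frac{(\tp-2)(1+n\tp)}{2+n-\tp}$ from Theorem~\ref{negg}) that is nonzero for $\tp \neq 2$. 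The negative result is then immediate: for $\tp \neq 2$ the coefficient $iL(\tp-1)$ is not $iL$, and for $\tp \neq 2$ and $L \neq 0,\pm 1$ the correction term $\frac{L^2}{L^2-1}C(\tp)(X_2 v^{\eta})(X_2 w^{\tau})$ is not identically zero on $\mathbb{H}^1 \setminus \{0\}$; hence $\Delta_\tp u + iL[X_1,X_2]u$ cannot vanish identically. The hypothesis $\tp \neq 4$ is exactly what makes $\eta$ and $\tau$ well defined.

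The main obstacle I anticipate is purely the bookkeeping: because $X_1$ and $X_2$ have variable coefficients, the second horizontal derivatives $X_iX_i u$ and the gradients of $\|\nabla_0 u\|^2$ generate many cross terms, and the work is to verify that all of them reorganize into a single common monomial times a polynomial in $v$ and $w$, so that the final cancellation is visible rather than buried. The relations $X_2 v = -i\,X_1 v$, $X_2 w = i\,X_1 w$, and $vw = (x_1^2+x_2^2)^2 + 16z^2$ are the levers that keep this manageable, and pinning down the constant $C(\tp)$ precisely — thereby confirming that the correction term really is nonzero in the stated range of $\tp$ and $L$ — is the one step where genuine care is required.
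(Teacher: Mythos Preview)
Your plan is sound and would prove the theorem, but it takes a different route from the paper. The paper does \emph{not} establish a corrected equation analogous to Theorem~\ref{negg}; instead it simply assembles the same building blocks you list ($X_ku$, $\overline{X_ku}$, $X_kX_ku$, $\|\nabla_0 u\|^2$, $X_k\|\nabla_0 u\|^2$, and $iLZu$), plugs them into the reduction formula \eqref{reductionh}, and displays the resulting explicit expression for $\Delta_\tp u + iLZu$, which is visibly nonzero for generic $L$. Your approach is more ambitious: you aim to identify the precise correction terms that make the equation hold, and then read off the negative result as a corollary. That buys you extra information (a Heisenberg analogue of the full statement of Theorem~\ref{negg}), at the cost of more algebra and the need to pin down the constant $C(\tp)$ exactly. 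The paper's route is shorter for the bare negative statement.

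Two cautions about your write-up. First, since $u$ is complex-valued, the horizontal ``norm'' used throughout is $\|\nabla_0 u\|^2 = X_1u\,\overline{X_1u} + X_2u\,\overline{X_2u}$, not $(X_1u)^2 + (X_2u)^2$; the paper computes the conjugates $\overline{X_ku}$ explicitly for this reason, and getting this wrong would derail the whole calculation. Second, your final deduction is a little loose: from the corrected identity you need $\Delta_\tp u + iLZu$ to equal the difference between $iLZu$ and the two correction terms, and you must check that this difference is not identically zero --- arguing that each correction term is individually nonzero does not suffice. In practice this is easy once the formulas are in hand (evaluate at a convenient point, or compare homogeneities), but it should be stated.
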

\begin{proof}
Recall from equation \eqref{reductionh}, 
\begin{eqnarray*}
\Delta_\tp u & = & \|\nabla_{0} u\|^{\tp-4}\bigg((\tp-2)\frac{1}{2}
\Big(X_1\|\nabla_0u\|^2(X_1u)+X_2\|\nabla_0u\|^2(X_2u)\Big)\bigg) \\
&& \mbox{}+\|\nabla_{0} u\|^{2}\big(X_1X_1u+X_2X_2u\big)\bigg).
\end{eqnarray*}
Thus to compute $$\Delta_{\tp}u+iL[X_1,X_2]u,$$ we will consider 
\begin{eqnarray*}
\|\nabla_{0} u\|^{\tp-4}\bigg((\tp-2)\frac{1}{2}
\Big(X_1\|\nabla_0u\|^2(X_1u)\\
\mbox{}+X_2\|\nabla_0u\|^2(X_2u)\Big) 
+\|\nabla_{0} u\|^{2}\big(X_1X_1u+X_2X_2u\big)\bigg)+iLZu
\end{eqnarray*}
We compute:
\begin{eqnarray*}
X_1u & = & 2v^{\eta-1}w^{\tau-1}\big((\eta w+\tau v)x_1+(\eta w-\tau v)ix_2\big)\\
\overline{X_1u} & = & 2w^{\eta-1}v^{\tau-1}\big((\eta v+\tau w)x_1-(\eta v-\tau w)ix_2\big)\\
X_2u & = & 2v^{\eta-1}w^{\tau-1}\big((\eta w+\tau v)x_2-(\eta w-\tau v)ix_1\big)\\
\overline{X_2u} & = & 2w^{\eta-1}v^{\tau-1}\big((\eta v+\tau w)x_2+(\eta v-\tau w)ix_1\big)\\
\|\nabla_0u\|^2 & = & 2^3(\eta^2+\tau^2)v^{\eta+\tau-1}w^{\eta+\tau-1}(x_1^2+x_2^2)  \\
X_1X_1u & = & 2v^{\eta - 2}w^{\tau - 2} \Big(2\big((\eta w + \tau v)x_1^2 \\ 
& & \mbox{} + (\eta w- \tau v)ix_1x_2\big)\big((\eta - 1)w+ (\tau - 1)v\big) \\
& & \mbox{} -2i\big((\eta w + \tau v)x_1x_2 + (\eta w- \tau v)ix_2^2\big)\big(-(\eta - 1)w+(\tau - 1) v\big)\\
& & \mbox{} +vw\big(2(x_1^2 + x_2^2)(\tau + \eta)+vw(\eta w + \tau v)\big)\Big)\\
X_2X_2u & = & 2v^{\eta - 2} w^{\tau - 2} \Big( 2 \big((\eta w+\tau v)x_2^2 \\ 
& & \mbox{}+ (-\eta w + \tau v)ix_1x_2\big) \big((\eta -1)w+(\tau - 1)v\big)\\ 
& & \mbox{} + 2i \big((\eta w + \tau v)x_1x_2 + (-\eta w + \tau v)ix_1^2\big)\big(-(\eta - 1)w + (\tau - 1)v\big) \\
& & \mbox{} + v w\big(2(x_1^2 + x_2^2)(\eta + \tau) + (\eta w + \tau v)\big)\Big)   \\
iLZu&=&-4Lv^{\eta-1}w^{\tau-1}(-\eta w + \tau v).
\end{eqnarray*}
To proceed, we shall require the following,
\begin{eqnarray*}
X_1\|\nabla_{0}u\|^{2} & = & 2^4(\eta^2+\tau^2)v^{\eta+\tau-2}w^{\eta+\tau-2} \\
&& \mbox{} \times \bigg(x_1vw+2(\eta+\tau-1)x_1(x_1^2+x_2^2)^2\\
&& \mbox{}-8(\eta+\tau-1)x_2z(x_1^2+x_2^2)\bigg)\\
\end{eqnarray*}
and
\begin{eqnarray*}
X_2\|\nabla_{0}u\|^{2} & = & 2^4(\eta^2+\tau^2)v^{\eta+\tau-2}w^{\eta+\tau-2} \\
&& \mbox{} \times \bigg(x_2vw+2(\eta+\tau-1)x_2(x_1^2+x_2^2)^2\\
&& \mbox{}+8(\eta+\tau-1)x_1z(x_1^2+x_2^2)\bigg).
\end{eqnarray*}
Using the above, we can now compute
\begin{eqnarray*}
\lefteqn{X_1\|\nabla_0u\|^2(X_1u)+X_2\|\nabla_0u\|^2(X_2u) =} & & \\
&& \mbox{}2^5(\eta^2 + \tau^2)v^{2 \eta + \tau - 3} w^{\eta + 2 \tau -  3}\\
&& \mbox{}\times \Big((\eta w + \tau v)vw(x_1^2 +x_2^2) + \big(2(\eta w+\tau v)(\eta + \tau - 1)(x_1^2 + x_2^2)^3\big)\\
&&\mbox{} -8(\eta w- \tau v)(\eta + \tau - 1)iz(x_1^2 + x_2^2)^2\Big)
\end{eqnarray*}
and
\begin{eqnarray*}
\lefteqn{\|\nabla_0u\|^2\big(X_1X_1u+X_2X_2u\big) =} & & \\
& & \mbox{} 2^4 (\eta^2 + \tau^2) v^{2 \eta + \tau - 3} w^{\eta + 2 \tau - 3}(x_1^2 + x_2^2)\\
&&\mbox{}\Big(2vw(\eta w + \tau v)+ 4vw(\eta + \tau) (x_1^2 + x_2^2)\\
&&\mbox{}+ 2 \big((\eta - 1) w + (\tau - 1) v\big)(\eta w + \tau v) (x_1^2 + x_2^2) \\
&&\mbox{}+ 2 \big(-(\eta - 1) w + (\tau - 1) v\big) (\eta w - \tau v) (x_1^2 + x_2^2)\Big).
\end{eqnarray*}
We can now compute
\begin{eqnarray*}
\lefteqn{(\tp-2)\frac{1}{2}\Big(X_1\|\nabla_0u\|^2(X_1u)+X_2\|\nabla_0u\|^2(X_2u)\Big) +\|\nabla_{0} u\|^{2}\big(X_1X_1u+X_2X_2u\big)=} && \\
&& -\left(\frac{1}{(-1 + \tp)^4}\right)L (1 + L^2) (-4 + \tp)^3 (x_1^2 + x_2^2) g^{\frac{
  4 + L (-4 + \tp) + 5 \tp}{4 - 4 \tp}} h^{\frac{4 + 4 L + 5 \tp - L \tp} {4 - 4 \tp}}\\
&&\mbox{}   \big(L (-4 + \tp) (x_1^2 + x_2^2) + 4 i (-1 + \tp) \tp z\big).
\end{eqnarray*}
Using this, we calculate:
\begin{eqnarray*}
\Delta_{\tp}u+iLZu
= -8 L v^{\frac{1}{2} (-3 + L)} \big(L (x_1^2 + x_2^2) - 4 i z\big) w^{\frac{1}{2}(-3 - L)}. 
\end{eqnarray*}
\end{proof}

\section{A Generalization }
\subsection{Grushin-type planes}
In the previous section, we see our first instinct does not work as well as we wished. We now will put the equation given in Theorem \ref{negg} into 
divergence form. For the Grushin-type planes, we have 
\begin{equation}
\Delta_2\phi+iL[Y_1, Y_2]\phi=\divergence_G\left( \begin{array}{c}
Y_1\phi+iLY_2\phi\\
Y_2\phi-iLY_1\phi
\end{array} \right).\label{motive}
\end{equation}
Inspired by the definition of $\Delta_{\tp}$ in Equation \eqref{reductiong}  we consider 
\begin{eqnarray}
\overline{\Delta_{\tp}}\phi=\divergence_G{\Bigg( \bigg\| \begin{array}{c}
 Y_1\phi+iLY_2\phi\\
 Y_2\phi-iLY_1\phi
\end{array} \bigg\|^{\tp-2} 
 \left( \begin{array}{c}
Y_1\phi+iLY_2\phi\\
Y_2\phi-iLY_1\phi
\end{array} \right) \Bigg)}, \label{trad}
\end{eqnarray}

Using Equation \eqref{trad}, we have the following theorem. We will again suppress the subscripts on the function $f$ and on $\|\cdot\|$. 
\begin{theorem}
On $\mathbb{G}_n\setminus\{(a,b)\}$, we have
\begin{eqnarray*}
\overline{\Delta_{\tp}}f=\divergence_G{\Bigg( \bigg\| \begin{array}{c}
 Y_1f+iLY_2f\\
 Y_2f-iLY_1f
\end{array} \bigg\|^{\tp-2} 
 \left( \begin{array}{c}
Y_1f+iLY_2f\\
Y_2f-iLY_1f
\end{array} \right) \Bigg)}=0.
\end{eqnarray*}
\end{theorem}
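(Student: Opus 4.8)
The plan is to unwind definition \eqref{trad}: $\overline{\Delta_{\tp}}f=\divergence_G(\|V\|^{\tp-2}V)$, where $V$ is the twisted gradient
\[
V:=\begin{pmatrix} Y_1f+iLY_2f\\ Y_2f-iLY_1f\end{pmatrix}=R\,\nabla_0 f,\qquad R:=\begin{pmatrix}1 & iL\\ -iL & 1\end{pmatrix}
\]
(with $\nabla_0 f$ written as a column), and to reduce everything to Theorem \ref{T6} by identifying $V$ with a scalar function times $\nabla_0\psi_{\tp}$, where $\psi_{\tp}=(gh)^{\tau_{\tp}}$ is the Bieske--Gong solution of $\Delta_{\tp}\psi_{\tp}=0$ and $\tau_{\tp}=\frac{n+2-\tp}{(2n+2)(1-\tp)}$. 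The key point is that $\nabla_0 g$ and $\nabla_0 h$ point along the ``null'' directions $(1,i)$ and $(1,-i)$, which are eigenvectors of $R$ with eigenvalues $1-L$ and $1+L$. Since the exponents were tailored so that $(1-L)\alpha=(1+L)\beta=\tau_{\tp}(1-L^2)$, writing $\nabla_0 f=\alpha g^{\alpha-1}h^{\beta}\nabla_0 g+\beta g^{\alpha}h^{\beta-1}\nabla_0 h$ and applying $R$ gives
\[
V=\tau_{\tp}(1-L^2)\,g^{\alpha-1}h^{\beta-1}\bigl(h\,\nabla_0 g+g\,\nabla_0 h\bigr)=\tau_{\tp}(1-L^2)\,g^{\alpha-1}h^{\beta-1}\,\nabla_0(gh)=(1-L^2)\Bigl(\tfrac{g}{h}\Bigr)^{\tau_{\tp}L}\nabla_0\psi_{\tp},
\]
using $g^{\alpha-1}h^{\beta-1}=(gh)^{\tau_{\tp}-1}(g/h)^{\tau_{\tp}L}$ (valid since $\alpha+\beta=2\tau_{\tp}$ and $\alpha-\beta=2\tau_{\tp}L$) together with $\nabla_0\psi_{\tp}=\tau_{\tp}(gh)^{\tau_{\tp}-1}\nabla_0(gh)$. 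Here $\psi_{\tp}$ is real and positive on $\mathbb{G}_n\setminus\{(a,b)\}$ because $gh=c^2(y_1-a)^{2n+2}+(n+1)^2(y_2-b)^2$.

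Next, since $\overline g=h$ forces $|g/h|\equiv 1$ and $\psi_{\tp}$ is real, the norm in \eqref{trad} satisfies $\|V\|=|1-L^2|\,\|\nabla_0\psi_{\tp}\|$, hence
\[
\|V\|^{\tp-2}V=|1-L^2|^{\tp-2}(1-L^2)\Bigl(\tfrac{g}{h}\Bigr)^{\tau_{\tp}L}\bigl(\|\nabla_0\psi_{\tp}\|^{\tp-2}\nabla_0\psi_{\tp}\bigr).
\]
Applying the Leibniz rule $\divergence_G(\varphi F)=\varphi\,\divergence_G F+\ipg{\nabla_0\varphi}{F}$ with $\varphi=(g/h)^{\tau_{\tp}L}$ and $F=\|\nabla_0\psi_{\tp}\|^{\tp-2}\nabla_0\psi_{\tp}$, using $\divergence_G F=\Delta_{\tp}\psi_{\tp}=0$ from Theorem \ref{T6}, and pulling the real scalar $\|\nabla_0\psi_{\tp}\|^{\tp-2}$ out of the pairing, one is left with
\[
\overline{\Delta_{\tp}}f=|1-L^2|^{\tp-2}(1-L^2)\,\|\nabla_0\psi_{\tp}\|^{\tp-2}\,\ipg{\nabla_0\bigl((g/h)^{\tau_{\tp}L}\bigr)}{\nabla_0\psi_{\tp}},
\]
so it remains only to prove the orthogonality $\ipg{\nabla_0((g/h)^{\tau_{\tp}L})}{\nabla_0\psi_{\tp}}=0$.

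For this, both gradients are scalar multiples of $\nabla_0(g/h)$ and $\nabla_0(gh)$, so it suffices to show $\ipg{\nabla_0(g/h)}{\nabla_0(gh)}=0$. Writing $\nabla_0(g/h)=h^{-2}\bigl(h\,\nabla_0 g-g\,\nabla_0 h\bigr)$ and $\nabla_0(gh)=h\,\nabla_0 g+g\,\nabla_0 h$ and expanding using the bilinearity and symmetry of $\ipg{\cdot}{\cdot}$, the cross terms cancel, leaving $h^{-2}\bigl(h^2\ipg{\nabla_0 g}{\nabla_0 g}-g^2\ipg{\nabla_0 h}{\nabla_0 h}\bigr)$. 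Finally $\ipg{\nabla_0 g}{\nabla_0 g}=(Y_1g)^2+(Y_2g)^2=c^2(n+1)^2(y_1-a)^{2n}(1+i^2)=0$, and symmetrically $\ipg{\nabla_0 h}{\nabla_0 h}=0$, which completes the proof. (When $\tp=n+2$, where $f=\log(g^{1+L}h^{1-L})$, the same computation gives $V=(1-L^2)\nabla_0\log(gh)=(1-L^2)\nabla_0\psi_{n+2}$, so $\overline{\Delta_{n+2}}f$ is a constant multiple of $\Delta_{n+2}\psi_{n+2}=0$ by Theorem \ref{T6} directly, with no orthogonality step needed.)

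The proof has no genuinely hard step; the content is conceptual, in spotting two algebraic coincidences. First, $\alpha=\tau_{\tp}(1+L)$ and $\beta=\tau_{\tp}(1-L)$ are exactly the exponents for which $R\,\nabla_0 f$ recombines into a multiple of $\nabla_0(gh)$ --- forced by $R$ acting diagonally on the null eigendirections together with the identity $(1\mp L)\alpha_{\pm}=\tau_{\tp}(1-L^2)$ --- so that the twisted operator effectively sees $f$ as the $\tp$-Laplace solution $\psi_{\tp}$. Second, $\nabla_0 g$ and $\nabla_0 h$ are null vectors for the bilinearly extended Grushin inner product, which annihilates the remaining Leibniz term. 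The only care needed is the meaning of $\|\cdot\|$ on the complex-valued vector in \eqref{trad}; reassuringly the argument is insensitive to the choice, since with the ``sum of squares'' convention the power of $g/h$ merely becomes $\tau_{\tp}L(\tp-1)$ and $\ipg{\nabla_0(g/h)}{\nabla_0(gh)}=0$ still closes it. A brute-force alternative --- compute $\|V\|^{\tp-2}V$ componentwise, take $\divergence_G$, and verify that the scalar prefactor $(n\tp+n+2)+(n+1)\bigl((\alpha+\beta)(\tp-1)-\tp\bigr)$ vanishes via $\alpha+\beta=\frac{n+2-\tp}{(1-\tp)(n+1)}$ --- also works, but is longer and less illuminating.
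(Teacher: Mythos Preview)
Your proof is correct and takes a genuinely different route from the paper's. The paper proceeds by direct computation: it expands $\overline{\Delta_{\tp}}f$ as in your final ``brute-force alternative,'' computes $Y_if$, $\xi_i$, $\|\xi\|^2$, $Y_i\|\xi\|^2$, and $Y_i\xi_i$ explicitly, then verifies the resulting scalar vanishes, separately for $\tp\neq n+2$ and $\tp=n+2$. Your argument instead factors the twisted gradient as $V=(1-L^2)(g/h)^{\tau_{\tp}L}\nabla_0\psi_{\tp}$, reducing the problem to Theorem~\ref{T6} plus the orthogonality $\ipg{\nabla_0(g/h)}{\nabla_0(gh)}=0$, which in turn follows from the null-vector identity $\ipg{\nabla_0 g}{\nabla_0 g}=\ipg{\nabla_0 h}{\nabla_0 h}=0$ for the bilinear pairing. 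This is more conceptual: it explains \emph{why} the exponents $\alpha=\tau_{\tp}(1+L)$, $\beta=\tau_{\tp}(1-L)$ are the right ones (they force $R\nabla_0 f$ back onto a multiple of $\nabla_0(gh)$) and makes the connection to the Bieske--Gong solution transparent rather than incidental. The paper's computation, by contrast, is self-contained and does not invoke Theorem~\ref{T6}, but gives no structural insight; it is precisely the longer alternative you sketch at the end. Your handling of the $\tp=n+2$ case and of the Hermitian-versus-bilinear ambiguity in $\|\cdot\|$ is also sound: the paper implicitly uses the Hermitian norm (its $\|\xi\|^2$ is real), which matches your $|1-L^2|\,\|\nabla_0\psi_{\tp}\|$, while the Leibniz term uses the bilinear pairing, exactly as you set it up.
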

\begin{proof}
First, we let
\begin{eqnarray*}
 \xi  & = & \left( \begin{array}{c}
 \xi _1\\
 \xi _2
\end{array} \right)
= \left( \begin{array}{c}
Y_1f+iLY_2f\\
Y_2f-iLY_1f
\end{array} \right),
\end{eqnarray*}
and then we consider the following reduction:
\begin{eqnarray*}
\overline{\Delta_\tp} f & = & \divergence(\| \xi  \|^{\tp-2} \xi ) \\  
& = & Y_1\big(\| \xi \|^{\tp-2} \xi _1\big)+Y_2\big(\| \xi  \|^{\tp-2}\ \xi _2\big)  \\
& = & \frac{1}{2}(\tp-2)\| \xi \|^{\tp-4}Y_1\| \xi  \|^{2}\xi _1+\| \xi \|^{p-2}Y_1 \xi _1 \\ 
&  & \mbox{} +\frac{1}{2}(\tp-2)\| \xi \|^{\tp-4}Y_2\| \xi  \|^{2} \xi _2+\| \xi \|^{p-2}Y_2 \xi _2\\
& = & \frac{1}{2}(\tp-2)\| \xi \|^{\tp-4}\big(Y_1\| \xi  \|^{2} \xi _1+Y_2\| \xi \|^{2} \xi _2\big)\\  
& & \mbox{} +\| \xi \|^{\tp-2}\big(Y_1 \xi _1+Y_2 \xi _2\big)\\   
& = & \| \xi \|^{\tp-4}\Bigg(\frac{1}{2}(\tp-2)\big(Y_1\| \xi  \|^{2} \xi _1+Y_2\| \xi \|^{2} \xi _2\big) \\
& & \mbox{} +\| \xi \|^{2}\big(Y_1 \xi _1+Y_2 \xi _2\big)\Bigg)_.  
\end{eqnarray*}

Thus to show $\overline{\Delta_\tp} f=0,$ we need only show that 
\begin{equation*}
\Lambda\equiv\frac{1}{2}(\tp-2)\big(Y_1\| \xi \|^{2} \xi _1+Y_2\| \xi \|^{2} \xi _2\big)+\| \xi \|^{2}\big(Y_1 \xi _1+Y_2 \xi _2\big) = 0.
\end{equation*}

\emph{Case 1:} We compute for $\tp \neq n+2$, 
\begin{eqnarray*}
Y_1f & = & c(n+1)(y_1-a)^{n}g^{\alpha-1}h^{\beta-1}(\alpha h+\beta g)\\
Y_2f & = & ic(n+1)(y_1-a)^{n}g^{\alpha-1}h^{\beta-1}(\alpha h-\beta g)\\
Y_1f+iLY_2f & = & c(n+1)(y_1-a)^{n}g^{\alpha-1}h^{\beta-1}\big(\alpha h(1-L) +\beta g(1+L)\big)  \\
\overline{Y_1f+iLY_2f} & = & c(n+1)(y_1-a)^{n}h^{\alpha-1}g^{\beta-1}\big(\alpha g(1-L) +\beta h(1+L)\big)  \\
Y_2f-iLY_1f & = &  ic(n+1)(y_1-a)^{n}g^{\alpha-1}h^{\beta-1}\big(\alpha h(1-L) -\beta g(1+L)\big) \\
\overline{Y_2f-iLY_1f} & = &  -ic(n+1)(y_1-a)^{n}h^{\alpha-1}g^{\beta-1}\big(\alpha g(1-L) -\beta h(1+L)\big) \\
 \bigg\| \begin{array}{c}
 Y_1f+iLY_2f\\
 Y_2f-iLY_1f
\end{array} \bigg\|^{2}
& = & 2c^2(n+1)^2(y_1-a)^{2n}g^{\alpha+\beta-1}h^{\alpha+\beta-1}\\
&&\mbox{} \times\big(\alpha^2(1-L)^2+\beta^2(1+L)^2 \big). \\
\end{eqnarray*}
We then calculate:
\begin{eqnarray*}
\lefteqn{Y_1(Y_1f+iLY_2f)+Y_2(Y_2f-iLY_1f) =} & & \\
& & \left(\frac{1}{(-1+\tp)^2gh}\right)\bigg(c^2 (-1 + L^2)(1 + n)(2 + n - \tp) (-2 + \tp)\\
&&\mbox{} \times( y_1-a)^{2 n} h^{\frac{(-1 + L) (2 + n - \tp)}{2 (1 + n) (-1 + \tp)}} 
g^{-\frac{(1 + L) (2 + n - \tp)}{2 (1 + n) (-1 + \tp)}}\bigg)_.
\end{eqnarray*}
We can then calculate
\begin{eqnarray*}
Y_1\left( \bigg\| \begin{array}{c}
 Y_1f+iLY_2f\\
 Y_2f-iLY_1f
\end{array} \bigg\|^{2}\right)
& = &-\left(\frac{1}{(-1 + \tp)^3 gh} \right)\bigg(2 c^2 (-1 + L^2)^2 \\
&&\mbox{}\times(1 + n) (2 + n - \tp)^2 (y_1-a)^{-1 + 2 n} h^{\frac{1}{1 + n}+\frac{1}{1 - \tp}} \\
&&\mbox{}\times\big(c^2 (y_1-a)^{2 + 2 n} + n (1 + n) (-1 + \tp) (y-b)^2\big) g^{\frac{1}{1 + n} + \frac{1}{1 - \tp}} \bigg)\\
\end{eqnarray*}
and
\begin{eqnarray*}
Y_2\left( \bigg\| \begin{array}{c}
 Y_1f+iLY_2f\\
 Y_2f-iLY_1f
\end{array} \bigg\|^{2}\right)
& = & \left(\frac{1}{(-1 + \tp)^3 gh}\right)\bigg(2 c^3 (-1 + L^2)^2\\
&&\mbox{} \times(1 + n) (2 + n - \tp)^2 (1 + n \tp) ( y_1-a)^{3 n}h^{\frac{1}{1 + n}+ \frac{1}{1 - \tp}} \\
&&\mbox{}\times(b - y_2) g^{\frac{1}{1 + n} +\frac{1}{1 - \tp}} \bigg)_. \\
\end{eqnarray*}
Using the above quantities, we compute
\begin{eqnarray}
Y_1\left( \bigg\| \begin{array}{c}
 Y_1f+iLY_2f\\
 Y_2f-iLY_1f
\end{array} \bigg\|^{2}\right)\big(Y_1f+iLY_2f\big)
+Y_2\left( \bigg\| \begin{array}{c}
 Y_1f+iLY_2f\\
 Y_2f-iLY_1f
\end{array} \bigg\|^{2}\right)\big(Y_2f-iLY_1f\big)=\nonumber \\
 \mbox{}-\left(\frac{1}{(-1 + \tp)^4 (gh)^2}\right)\bigg(2 c^4 (-1 + L^2)^3 (1 + n) (2 + n - \tp)^3 (y_1-a)^{4 n}\label{gpart1} \\
 \mbox{}\times h^{\frac{(-3 + L) (2 + n - \tp)}{2 (1 + n) (-1 + \tp)}}
g^{-\frac{(3 + L) (2 + n - \tp)}{2 (1 + n) (-1 + \tp)}}\bigg)_. \nonumber
\end{eqnarray}
and
\begin{eqnarray*}
\lefteqn{\bigg\| \begin{array}{c}
 Y_1f+iLY_2f\\
 Y_2f-iLY_1f
\end{array} \bigg\|^{2}\big(Y_1(Y_1f+iLY_2f)+Y_2(Y_2f-iLY_1f)\big)=} && \\
& &\left(\frac{1}{(-1 + \tp)^4 (gh)^2} \right)
\bigg(c^4 (-1 + L^2)^3 (1 + n) (2 + n - \tp)^3 (-2 + \tp) (y_1-a)^{4 n} \\
&&\mbox{} \times h^{\frac{(-3 +L)(2 + n - \tp)}{2 (1 + n) (-1 + \tp)}} g^{-\frac{(3 + L) (2 + n - \tp)}{2 (1 + n) (-1 + \tp)}}\bigg).
\end{eqnarray*}
We can then calculate 
\begin{eqnarray*}
\Lambda &= & \frac{1}{2}(\tp-2)\bigg(-\left(\frac{1}{(-1 + \tp)^4 (gh)^2}\right)\Big(2 c^4 (-1 + L^2)^3 (1 + n) (2 + n - \tp)^3 (y_1-a)^{4 n}\\
&&\mbox{}\times h^{\frac{(-3 + L) (2 + n - \tp)}{2 (1 + n) (-1 + \tp)}}
g^{-\frac{(3 + L) (2 + n - \tp)}{2 (1 + n) (-1 + \tp)}}\Big)\bigg)\\
&&\mbox{}+\bigg(\left(\frac{1}{(-1 + \tp)^4 (gh)^2} \right)
\Big(c^4 (-1 + L^2)^3 (1 + n) (2 + n - \tp)^3 (-2 + \tp) (y_1-a)^{4 n} \\
&&\mbox{} \times h^{\frac{(-3 +L)(2 + n - \tp)}{2 (1 + n) (-1 + \tp)}} g^{-\frac{(3 + L) (2 + n - \tp)}{2 (1 + n) (-1 + \tp)}}\Big) \bigg)=0.
\end{eqnarray*}
So we have $\overline{\Delta_{\tp}}f=0$ when $\tp \neq n+2$.

\emph{Case 2:} \label{t p=n+2}
For $\tp = n+2$, we compute: 
\begin{eqnarray}
Y_1f & = & c(n+1)(y_1-a)^n\left(\frac{1+L}{g}+\frac{1-L}{h}\right) \nonumber\\
Y_2f & = & ic(n+1)(y_1-a)^n\left(\frac{1+L}{g}-\frac{1-L}{h}\right)\nonumber\\
Y_1f+iLY_2f & = & -\left(\frac{1}{gh}\right)\big(2c^2(L^2-1)(n+1)(y_1-a)^{2n+1}\big)\nonumber\\
\overline{Y_1f+iLY_2f} & = &Y_1f+iLY_2f \label{p=n+2 1}\\
Y_2f-iLY_1f & = &  \left(\frac{1}{gh}\right)\big(2c(L^2-1)(n+1)^2(y_1-a)^{n}(y_2-b)\big) \nonumber\\
\overline{Y_2f-iLY_1f} & = &  Y_2f-iLY_1f \label{p=n+2 2}\\
 \bigg\| \begin{array}{c}
 Y_1f+iLY_2f\\
 Y_2f-iLY_1f
\end{array} \bigg\|^{2}
& = & \left(\frac{1}{gh}\right)\big(4c^2(L^2-1)^2(n+1)^2(y_1-a)^{2n} \big). \nonumber
\end{eqnarray}

We then calculate:
\begin{eqnarray*}
\lefteqn{Y_1(Y_1f+iLY_2f)+Y_2(Y_2f-iLY_1f) =} & & \\
& & -\left(\frac{1}{gh}\right)\big(2c^2(L^2-1)n(n+1)(y_1-a)^{2n} \big).
\end{eqnarray*}
We can then calculate
\begin{eqnarray*}
Y_1\left( \bigg\| \begin{array}{c}
 Y_1f+iLY_2f\\
 Y_2f-iLY_1f
\end{array} \bigg\|^{2}\right)
& = & -\left(\frac{1}{(gh)^2}\right)\bigg(8c^2(L^2-1)^2(n+1)^2\\
& & \mbox{}\times (y_1-a)^{2n-1}\big(c^2(y_1-a)^{2n+2}+n(n+1)^2(y_2-b)^2\big) \bigg)\\
Y_2\left( \bigg\| \begin{array}{c}
 Y_1f+iLY_2f\\
 Y_2f-iLY_1f
\end{array} \bigg\|^{2}\right)
& = & -\left(\frac{1}{(gh)^2}\right)\big(8c^3(L^2-1)^2(n+1)^4\\
& & \mbox{}\times(y_1-a)^{3n}(y_2-b) \big).\\
\end{eqnarray*}
Using the above quantities, we compute
\begin{eqnarray*}
Y_1\left( \bigg\| \begin{array}{c}
 Y_1f+iLY_2f\\
 Y_2f-iLY_1f
\end{array} \bigg\|^{2}\right)\big(Y_1f+iLY_2f\big)\\
+Y_2\left( \bigg\| \begin{array}{c}
 Y_1f+iLY_2f\\
 Y_2f-iLY_1f
\end{array} \bigg\|^{2}\right)\big(Y_2f-iLY_1f\big)=
\end{eqnarray*}
$$\left(\frac{1}{(gh)^2}\right)\big(16c^4(L^2-1)^3(n+1)^3(y_1-a)^{4n} \big)$$
and
\begin{eqnarray*}
\lefteqn{\bigg\| \begin{array}{c}
 Y_1f+iLY_2f\\
 Y_2f-iLY_1f
\end{array} \bigg\|^{2}\big(Y_1(Y_1f+iLY_2f)+Y_2(Y_2f-iLY_1f)\big)=} && \\
& &-\left(\frac{1}{(gh)^2}\right)\big(8c^4(L^2-1)^3n(n+1)^3(y_1-a)^{4n} \big).\\
\end{eqnarray*}
We can then calculate 
\begin{eqnarray*}
\Lambda &= & \frac{1}{2}(\tp-2)\bigg(\left(\frac{1}{(gh)^2}\right)\big(16c^4(L^2-1)^3(n+1)^3(y_1-a)^{4n} \big)\bigg)\\
& & \mbox{}+\bigg(-\left(\frac{1}{(gh)^2}\right)\big(8c^4(L^2-1)^3n(n+1)^3(y_1-a)^{4n} \big)\bigg)\\
& = & \left(\frac{1}{(gh)^2}\right)\big(8c^4(L^2-1)^3n(n+1)^3(y_1-a)^{4n} \big)\\
& & \mbox{}-\left(\frac{1}{(gh)^2}\right)\big(8c^4(L^2-1)^3n(n+1)^3(y_1-a)^{4n} \big)=0.
\end{eqnarray*}
Thus $\overline{\Delta_{\tp}}f=0 $ for $1<\tp<\infty$ and for all $L \in \mathbb{R}$.
\end{proof}
We then conclude the following corollary.
\begin{corollary}\label{gsmooth}
Let $\tp>n+2$. The function $f_{\tp,L}$, as above, is a smooth solution to the Dirichlet problem
\begin{eqnarray*}
\left\{\begin{array}{cc}
\overline{\Delta_{\tp}}f(p)=0 & p \in \mathbb{G}_n\setminus\{(a,b)\} \\
0 & p = (a,b).
\end{array}\right.
\end{eqnarray*}
\end{corollary}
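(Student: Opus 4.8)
The plan is to split the statement into three essentially independent pieces --- the partial differential equation on the punctured space, interior smoothness, and the prescribed value at $(a,b)$ --- and to handle them in turn. For the equation, observe that $\tp>n+2$ forces $\tp\neq n+2$, so the preceding theorem applies verbatim and already gives $\overline{\Delta_{\tp}}f_{\tp,L}=0$ throughout $\mathbb{G}_n\setminus\{(a,b)\}$; no further computation is needed, since the only genuinely computational content of the corollary is exactly that identity.

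For interior smoothness I would note that $g$ and $h$ are polynomials with $h=\bar g$ at real points, so $gh=c^2(y_1-a)^{2n+2}+(n+1)^2(y_2-b)^2$, which is strictly positive on $\mathbb{G}_n\setminus\{(a,b)\}$ and vanishes only at $(a,b)$. Hence $g$ and $h$ are smooth and nowhere zero on the punctured space, so (after fixing a branch of the powers) $g^{\alpha}$ and $h^{\beta}$ are smooth --- in fact real-analytic --- there, and therefore so is $f_{\tp,L}=g^{\alpha}h^{\beta}$. In particular $\overline{\Delta_{\tp}}f_{\tp,L}$ is a classical expression on $\mathbb{G}_n\setminus\{(a,b)\}$, so $f_{\tp,L}$ is a smooth solution of the equation there.

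For the boundary value, since $\alpha,\beta$ are real and $|g|=|h|=(gh)^{1/2}$ we get $|f_{\tp,L}|=|g|^{\alpha}|h|^{\beta}=(gh)^{(\alpha+\beta)/2}$, with $\alpha+\beta=\frac{n+2-\tp}{(1-\tp)(n+1)}=\frac{\tp-n-2}{(\tp-1)(n+1)}$. The hypothesis $\tp>n+2$ is exactly what makes $\alpha+\beta>0$ (and also $\alpha+\beta<1$), so $|f_{\tp,L}(p)|=(gh)^{(\alpha+\beta)/2}\to 0$ as $p\to(a,b)$; thus setting $f_{\tp,L}(a,b):=0$ extends $f_{\tp,L}$ continuously to all of $\mathbb{G}_n$ and realizes the boundary condition, completing the verification of the Dirichlet problem.

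I do not expect a serious obstacle: the PDE identity is inherited from the theorem, and the rest rests only on elementary facts about $g,h$ and on the sign of $\alpha+\beta$. The two points deserving a word of care are that $g^{\alpha}$ and $h^{\beta}$ are multivalued when $\alpha,\beta\notin\mathbb{Z}$ --- harmless, since one fixes a branch and every claim above is either local or concerns $|f_{\tp,L}|$ --- and that $\alpha+\beta\in(0,1)$ precisely when $\tp>n+2$, which is exactly why $(a,b)$ must be regarded as a boundary point rather than an interior one: near it $f_{\tp,L}$ behaves like $(gh)^{(\alpha+\beta)/2}$, hence is continuous but not smooth at that single point.
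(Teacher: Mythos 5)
Your proposal is correct and matches the paper's (implicit) argument: the paper states the corollary without proof as an immediate consequence of the preceding theorem, and the intended content is exactly what you supply --- the identity $\overline{\Delta_{\tp}}f_{\tp,L}=0$ on the punctured plane from the theorem, smoothness there because $gh=c^2(y_1-a)^{2n+2}+(n+1)^2(y_2-b)^2$ vanishes only at $(a,b)$, and the boundary value from $|f_{\tp,L}|=(gh)^{(\alpha+\beta)/2}$ with $\alpha+\beta=\frac{\tp-n-2}{(\tp-1)(n+1)}>0$ precisely when $\tp>n+2$. Your aside about fixing a branch of $g^{\alpha}$ and $h^{\beta}$ is a genuine subtlety the paper glosses over, and flagging it is appropriate.
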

\subsection{The Heisenberg Group}
We begin by considering the $2n\times 1$ vector $\Upsilon$ with components 
\begin{eqnarray*}
\Upsilon_k= \begin{cases}
X_ku+iLX_{n+k}u & \textmd{when $k=1,2,\ldots, n$} \\
X_{k}u-iLX_{k-n}u & \textmd {when $k=n+1, n+2, \ldots, 2n$}.
\end{cases}
\end{eqnarray*}

Motivated by the Grushin case, we consider the equation 
\begin{eqnarray*}
\overline{\Delta_{\tp}}u=\div (\| \Upsilon\|^{\tp-2}\Upsilon  )=0
\end{eqnarray*}
in $\mathbb{H}^n\setminus\{0\}$. 
We then have the following theorem. We will again suppress the subscripts on the function $u$ and on $\|\cdot\|$. 
\begin{theorem}
In $\mathbb{H}^n\setminus\{0\}$, we have
\begin{eqnarray*}
\overline{\Delta_{\tp}}u=\div(\| \Upsilon\|^{\tp-2} \Upsilon)=0.
\end{eqnarray*}
\end{theorem}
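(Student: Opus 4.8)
The plan is to run the same argument that settled the Grushin case, now over the $2n$ horizontal vector fields of $\mathbb{H}^n$. Writing $\Upsilon=(\Upsilon_1,\dots,\Upsilon_{2n})$, the algebraic reduction that produced \eqref{reductionh} gives
\begin{eqnarray*}
\overline{\Delta_{\tp}}u & = & \div(\|\Upsilon\|^{\tp-2}\Upsilon)=\sum_{k=1}^{2n}X_k\big(\|\Upsilon\|^{\tp-2}\Upsilon_k\big)\\
& = & \|\Upsilon\|^{\tp-4}\bigg(\frac{1}{2}(\tp-2)\sum_{k=1}^{2n}\big(X_k\|\Upsilon\|^2\big)\Upsilon_k+\|\Upsilon\|^2\sum_{k=1}^{2n}X_k\Upsilon_k\bigg),
\end{eqnarray*}
so, exactly as in the Grushin proof, it suffices to show that
\begin{eqnarray*}
\Lambda\equiv\frac{1}{2}(\tp-2)\sum_{k=1}^{2n}\big(X_k\|\Upsilon\|^2\big)\Upsilon_k+\|\Upsilon\|^2\sum_{k=1}^{2n}X_k\Upsilon_k=0
\end{eqnarray*}
on $\mathbb{H}^n\setminus\{0\}$. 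As before I would split into Case 1, $\tp\neq 2n+2$, where $u=v^{\eta}w^{\tau}$ with $\eta=\frac{2n+2-\tp}{4(1-\tp)}(1-L)$ and $\tau=\frac{2n+2-\tp}{4(1-\tp)}(1+L)$, and Case 2, $\tp=2n+2$, where $u=\log(v^{1-L}w^{1+L})$; each is a direct computation.

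For Case 1 the first block of work records $X_ku$ and $\overline{X_ku}$ for all $k=1,\dots,2n$ (the displayed $\mathbb{H}^1$ formulas generalize verbatim, with the pair $x_1,x_2$ replaced by $x_j,x_{n+j}$), and then assembles $\Upsilon_k$, $\overline{\Upsilon_k}$ and $\|\Upsilon\|^2=\sum_k\Upsilon_k\overline{\Upsilon_k}$. The crucial simplification — the exact analogue of what happens in the Grushin argument — is that the choice of $\eta,\tau$ makes $\eta(1+L)$ and $\tau(1-L)$ both equal to $\frac{2n+2-\tp}{4(1-\tp)}(1-L^2)$, so that each $\Upsilon_k$ collapses to a single constant multiple of $v^{\eta-1}w^{\tau-1}$ times a polynomial in $x_1,\dots,x_{2n},z$ (using $v+w=2\sum_l x_l^2$ and $w-v=8iz$), and $\|\Upsilon\|^2$ collapses to a constant multiple of $v^{\eta+\tau-1}w^{\eta+\tau-1}\sum_l x_l^2$, i.e.\ the same monomial that appears in $\|\nabla_0u\|^2$. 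With these in hand I would compute $\sum_k X_k\Upsilon_k$ and each $X_j\|\Upsilon\|^2$ in closed form, and then the two contractions $\sum_k(X_k\|\Upsilon\|^2)\Upsilon_k$ and $\|\Upsilon\|^2\sum_k X_k\Upsilon_k$; following the Grushin pattern these should come out, respectively, as $-2$ times and $(\tp-2)$ times one and the same monomial in $v,w$ and $x$ (all carrying the factor $(1-L^2)^3$ from $\Upsilon$ and $\|\Upsilon\|^2$), so that $\frac{1}{2}(\tp-2)(-2)+(\tp-2)=0$ and hence $\Lambda=0$. Case 2 is the same computation with the logarithmic solution: the powers of $v,w$ degenerate into overall $1/(vw)$ and $1/(vw)^2$ factors, mirroring Grushin Case 2, and again two equal-and-opposite terms remain.

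The main obstacle is organizational rather than conceptual. Unlike the Grushin plane there are $2n$ vector fields, and, more seriously, $X_j$ and $X_{n+j}$ do not commute — $[X_j,X_{n+j}]=Z$ — so care is needed in the second-order sum $\sum_k X_k\Upsilon_k$, keeping explicit track of the $Zu$ contributions that appear when $X_j$ differentiates the $z$-dependence of $v$ and $w$. This is precisely what the correction terms $+iLX_{n+j}u$ (resp.\ $-iLX_{j}u$) in the definition of $\Upsilon$ are built to absorb: the $z$-dependent pieces that spoiled the naive operator $\Delta_{\tp}u+iL[X_1,X_2]u$ in Theorem \ref{negh} are exactly the ones that cancel inside $\Lambda$. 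I would therefore carry the $z$-terms of $X_j\|\Upsilon\|^2$ and of $\sum_k X_k\Upsilon_k$ along explicitly and verify that they drop out of $\Lambda$, leaving the clean monomial identity whose vanishing was described above. As in the Grushin case, the factored form $\overline{\Delta_{\tp}}u=\|\Upsilon\|^{\tp-4}\Lambda$ is read where it is meaningful, and no further work is needed once $\Lambda\equiv0$ is established.
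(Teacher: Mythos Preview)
Your proposal is correct and follows essentially the same approach as the paper: the paper makes the identical reduction to the quantity you call $\Lambda$, splits into the cases $\tp\neq 2n+2$ and $\tp=2n+2$, computes $\|\Upsilon\|^2$ (obtaining exactly the constant multiple of $(vw)^{\eta+\tau-1}\sum_j x_j^2$ you predict), and then shows $\sum_j(X_j\|\Upsilon\|^2)\Upsilon_j$ and $\|\Upsilon\|^2\sum_j X_j\Upsilon_j$ are proportional to the same monomial $v^{\mathcal M}w^{\mathfrak M}(\sum_j x_j^2)^2$ with coefficients $(4n+2)$ and $-(2n+1)(\tp-2)$ respectively, giving the cancellation $\frac{1}{2}(\tp-2)(4n+2)-(2n+1)(\tp-2)=0$. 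Your observation that $\eta(1+L)=\tau(1-L)$ is exactly the mechanism behind the collapse to these clean monomials, though the paper does not articulate it and simply displays the computed formulas.
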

\begin{proof}
Following Equation \eqref{reductionh}, we have the following reduction:
\begin{eqnarray}
\overline{\Delta_\tp} u & = & \div(\|\Upsilon \|^{\tp-2}\Upsilon) = \sum_{j=1}^{2n}X_j(\|\Upsilon\|^{\tp-2}\Upsilon_j) \nonumber\\
& = & \frac{1}{2}(\tp-2)\|\Upsilon\|^{\tp-4}\sum_{j=1}^{2n}X_j\|\Upsilon \|^{2}\Upsilon_j+\sum_{j=1}^{2n}\|\Upsilon\|^{\tp-2}X_j\Upsilon_j \nonumber \\ 
& = & \|\Upsilon\|^{\tp-4}\Bigg(\frac{1}{2}(\tp-2)\sum_{j=1}^{2n}X_j\|\Upsilon \|^{2} \Upsilon_j +\|\Upsilon\|^{2}\sum_{j=1}^{2n}X_j\Upsilon_j\Bigg)_.  \label{hform}
\end{eqnarray}
Thus to show $\overline{\Delta_\tp} u = 0,$ we will show that 
\begin{equation}\label{lambda}
\frac{1}{2}(\tp-2)\sum_{j=1}^{2n}X_j\|\Upsilon \|^{2} \Upsilon_j +\|\Upsilon\|^{2}\sum_{j=1}^{2n}X_j\Upsilon_j = 0.
\end{equation}

We first consider the case $\tp \neq 2n+2$. 

For $k=1,2,\ldots, n$, we have 
\begin{equation*}
X_ku = 2v^{\eta-1}w^{\tau-1}\big((\eta w+\tau v)x_k-(-\eta w+\tau v)ix_{n+k}\big)
\end{equation*}
and for $l=n+1, n+2, \ldots, 2n$, we have
\begin{equation*}
X_lu  =  2v^{\eta-1}w^{\tau-1}\big((\eta w+\tau v)x_l+(-\eta w+\tau v)ix_{l-n}\big).
\end{equation*}
We then have $k=1,2,\ldots, n$, 
\begin{eqnarray*}
X_ku+iLX_{n+k}u & = & 2v^{\eta-1}w^{\tau-1}\bigg((\eta w+ \tau v)(x_k+iLx_{n+k})+(\eta w- \tau v)(Lx_k+ix_{n+k})\bigg)\\
\textmd{and} \ \ \ \overline{X_ku+iLX_{n+k}u} & = & 2w^{\eta-1}v^{\tau-1}\bigg((\eta v+ \tau w)(x_k-iLx_{n+k}) +(\eta v- \tau w)(Lx_k-ix_{n+k})\bigg)
\end{eqnarray*}
and for $l=n+1, n+2, \ldots, 2n$, we have
\begin{eqnarray*}
X_lu-iLX_{l-n}u & = & 2v^{\eta-1}w^{\tau-1}\bigg((\eta w+ \tau v)(x_l-iLx_{l-n})+(\eta w- \tau v)(Lx_{l}-ix_{l-n})\bigg)\\
\textmd{and} \ \ \ \overline{X_lu-iLX_{l-n}u} & = & 2w^{\eta-1}v^{\tau-1}\bigg((\eta v+ \tau w)(x_l+iLx_{l-n})+(\eta v- \tau w)(Lx_{l}+ix_{l-n})\bigg).
\end{eqnarray*}
A routine calculation produces
\begin{equation}\label{normval}
\| \Upsilon\|^{2}
= \frac{ (\tp-(2n+2))^2 }{(\tp-1)^2}( L^2-1)^2\Big(\sum_{j=1}^{2n}x_j^2 \Big)v^{\frac{2n+ \tp}{2 - 2 \tp}}w^{\frac{2n+ \tp}{2 - 2 \tp}}
\end{equation}
which yields 
\begin{equation}\label{part1}
\sum_{j=1}^{2n}X_j(\| \Upsilon\|^{2})\Upsilon_j =
(4n+2) \frac{(\tp-(2n+2))^3}{(\tp-1)^4}(L^2-1)^3\Big(\sum_{j=1}^{2n}x_j^2 \Big)^2 v^{\mathcal{M}}w^{\mathfrak{M}}.
\end{equation}
where 
\begin{eqnarray*}
\mathcal{M}=\frac{\D L(\tp-(2n+2)) + (6n-2) + 5\tp}{\D 4(1-\tp)}& \textmd{and} &
\mathfrak{M}=\frac{\D -L(\tp-(2n+2))+5\tp+(6n-2))}{\D 4(1-\tp)}.
\end{eqnarray*}
In addition, we have 
\begin{equation}\label{part2}
\sum_{j=1}^{2n}X_j\Upsilon_j = -(2n+1)\frac{(\tp-2)(p-(2n+2))}{(\tp-1)^2}(L^2-1)\Big(\sum_{j=1}^{2n}x_j^2\Big) v^{\mathcal{C}}w^{\mathfrak{C}}
\end{equation}
where 
\begin{eqnarray*}
\mathcal{C}=\frac{\D L(\tp-(2n+2)) + (2n-2) + 3\tp}{\D 4(1-\tp)}& \textmd{and} &
\mathfrak{C}=\frac{\D -L(\tp-(2n+2))+3\tp+(2n-2))}{\D 4(1-\tp)}.
\end{eqnarray*}
Combining Equations \eqref{normval} and \eqref{part2} we obtain
\begin{equation}\label{part2combine}
\|\Upsilon\|^{2}\sum_{j=1}^{2n}X_j\Upsilon_j =
-(2n+1)(p-2)\frac{ (\tp-(2n+2))^3}{(\tp-1)^4}(L^2-1)^3\Big(\sum_{j=1}^{2n}x_j^2 \Big)^2v^{\mathcal{M}}w^{\mathfrak{M}}
\end{equation}
with $\mathcal{M}$ and $\mathfrak{M}$ defined as above. Equation \eqref{lambda} then holds. 

The proof of the case $\tp = 2n+2$ is similar and omitted. 
\end{proof}

The following corollary naturally follows:
\begin{corollary}\label{hsmooth}
Let $\tp>2n+2$. The function $u_{\tp,L}$, as above, is a smooth solution to the Dirichlet problem
\begin{eqnarray*}
\left\{\begin{array}{cc}
\overline{\Delta_{\tp}}u(p)=0 & p \in \mathbb{H}^n\setminus\{0\} \\
0 & p = 0.
\end{array}\right.
\end{eqnarray*}
\end{corollary}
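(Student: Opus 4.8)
The plan is to read off the corollary from the theorem just proved together with two elementary facts about the explicit function $u_{\tp,L}$. Since $\tp>2n+2$ we are in particular in the case $\tp\neq 2n+2$, so $u_{\tp,L}=v^{\eta}w^{\tau}$ with $v=\big(\sum_{j=1}^{2n}x_j^2\big)-4iz$, $w=\big(\sum_{j=1}^{2n}x_j^2\big)+4iz$, and $\eta,\tau$ as defined above. The preceding theorem already supplies $\overline{\Delta_{\tp}}u_{\tp,L}=0$ on $\mathbb{H}^n\setminus\{0\}$, and since $\tp-2>2n\geq 2$ the field $\|\Upsilon\|^{\tp-2}\Upsilon$ is continuous wherever $\Upsilon$ vanishes (namely on the center axis), so $\overline{\Delta_{\tp}}$ is genuinely defined throughout $\mathbb{H}^n\setminus\{0\}$. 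Hence only two points remain: that $u_{\tp,L}$ is smooth on the punctured group, and that it extends continuously to the origin with value $0$.

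For smoothness on $\mathbb{H}^n\setminus\{0\}$, I would note that $\operatorname{Re} v=\operatorname{Re} w=\sum_{j=1}^{2n}x_j^2\geq 0$, so neither $v$ nor $w$ ever meets the negative real axis, while each of $v,w$ vanishes only when $\sum_{j=1}^{2n}x_j^2=0$ and $z=0$, i.e.\ only at $0$. Therefore on $\mathbb{H}^n\setminus\{0\}$ the principal-branch powers $v^{\eta}=\exp(\eta\log v)$ and $w^{\tau}=\exp(\tau\log w)$ are well defined and real-analytic, whence $u_{\tp,L}$ is $C^{\infty}$ there.

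For the behaviour at the origin, I would compute the modulus. Since $\eta,\tau$ are real and $|w|=|v|$,
\[
|u_{\tp,L}| \;=\; |v|^{\eta}\,|w|^{\tau} \;=\; |v|^{\eta+\tau} \;=\; \Big(\big(\textstyle\sum_{j=1}^{2n}x_j^2\big)^2+16z^2\Big)^{(\eta+\tau)/2},
\]
and a direct simplification gives $\eta+\tau=\dfrac{2n+2-\tp}{2(1-\tp)}=\dfrac{\tp-(2n+2)}{2(\tp-1)}$, which is strictly positive exactly because $\tp>2n+2$. Consequently $|u_{\tp,L}(p)|\to 0$ as $p\to 0$, so declaring $u_{\tp,L}(0)=0$ makes $u_{\tp,L}$ continuous on all of $\mathbb{H}^n$ and equal to the prescribed value at the singular point; combined with the previous two paragraphs this exhibits $u_{\tp,L}$ as a smooth solution of the stated Dirichlet problem.

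There is no serious obstacle here; the one place meriting care is the meaning of the complex powers $v^{\eta},w^{\tau}$ and the fact that the principal branch is globally consistent on the punctured group, which is precisely what the observation $\operatorname{Re} v,\operatorname{Re} w\geq 0$ secures. I would also remark, to forestall confusion, that for $\tp>2n+2$ one has $0<\eta+\tau<\tfrac12$, so $\|\nabla_0 u_{\tp,L}\|\to\infty$ at the origin and $u_{\tp,L}$ is not $C^1_{\mathbb{H}}$ there; thus ``smooth'' in the statement is to be understood as smoothness on the punctured domain on which the equation is posed, together with continuity up to the singular point.
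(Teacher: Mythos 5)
Your proposal is correct; the paper offers no argument beyond ``the following corollary naturally follows,'' and your fleshed-out version is exactly the intended justification: the theorem supplies $\overline{\Delta_{\tp}}u_{\tp,L}=0$ on the punctured group, the principal-branch powers of $v,w$ (which avoid $(-\infty,0]$ since $\operatorname{Re}v=\operatorname{Re}w\ge 0$ and vanish only at $0$) give smoothness there, and $|u_{\tp,L}|=|v|^{\eta+\tau}$ with $\eta+\tau=\frac{\tp-(2n+2)}{2(\tp-1)}>0$ gives continuity to the value $0$ at the origin. Your side remarks (well-definedness of the divergence along the center and non-$C^1_{\mathbb H}$ behaviour at $0$) are accurate and harmless additions.
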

\section{The limit as $\tp \to \infty$}
\subsection{Grushin-type planes}
We recall that on $\mathbb{G}_n\setminus\{(a,b)\}$, we have
\begin{eqnarray*}
\overline{\Delta_{\tp}}f & = & \divergence_G(\|\xi\|^{\tp-2}\xi) \\
 & = &  \| \xi \|^{\tp-4}\Bigg(\frac{1}{2}(\tp-2)\big(Y_1\| \xi  \|^{2} \xi _1+Y_2\| \xi \|^{2} \xi _2\big) +\| \xi \|^{2}\big(Y_1 \xi _1+Y_2 \xi _2\big)\Bigg)  
\end{eqnarray*}
with $\xi$ defined by
\begin{eqnarray*}
 \xi  & = & \left( \begin{array}{c}
 \xi _1\\
 \xi _2
\end{array} \right)
= \left( \begin{array}{c}
Y_1f+iLY_2f\\
Y_2f-iLY_1f
\end{array} \right)_.
\end{eqnarray*}
Formally letting $\tp \to \infty$, we obtain 
\begin{equation*}
\overline{\Delta_{\infty}}f = (Y_1\| \xi  \|^{2} )\xi _1+(Y_2\| \xi \|^{2}) \xi _2.
\end{equation*}
Also formally letting $\tp \to \infty$, produces the function 
\begin{equation*}
f_{\infty,L}(y_1,y_2) =\
g(y_1,y_2)^{\frac{1+L}{2n+2}}h(y_1,y_2)^{\frac{1-L}{2n+2}}
\end{equation*}
where we recall the functions $g$ and $h$ are given by
\begin{eqnarray*}
g(y_1,y_2) & = & c(y_1-a)^{n+1}+i(n+1)(y_2-b)\\ 
h(y_1,y_2) & = & c(y_1-a)^{n+1}-i(n+1)(y_2-b).
\end{eqnarray*}
We have the following theorem.
\begin{theorem}
The function $f_{\infty,L}$, as above, is a smooth solution to the Dirichlet problem
\begin{eqnarray*}
\left\{\begin{array}{cc}
\overline{\Delta_{\infty}}f_{\infty,L}(p)=0 & p \in \mathbb{G}_n\setminus\{(a,b)\} \\
0 & p = (a,b).
\end{array}\right.
\end{eqnarray*}
\end{theorem}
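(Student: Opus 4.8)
The plan is to verify the identity directly for the explicit function $f_{\infty,L}=g^{\alpha_\infty}h^{\beta_\infty}$, where $\alpha_\infty=\frac{1+L}{2n+2}$ and $\beta_\infty=\frac{1-L}{2n+2}$, rather than trying to pass to the limit $\tp\to\infty$ inside the identities already proved for $f_{\tp,L}$ (which would require justifying that the limit commutes with the differentiations hidden in $\overline{\Delta_\tp}$). The one structural feature driving everything is that the limiting exponents satisfy $\alpha_\infty+\beta_\infty=\frac{1}{n+1}$; this is exactly the value that forces the single surviving term of $\overline{\Delta_\infty}$ to vanish.

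First I would recall $\overline{\Delta_\infty}f=(Y_1\|\xi\|^2)\xi_1+(Y_2\|\xi\|^2)\xi_2$ with $\xi=(Y_1f+iLY_2f,\ Y_2f-iLY_1f)$, and apply the chain rule as in Case~1 of the $\overline{\Delta_\tp}$ theorem, using $Y_1g=Y_1h=c(n+1)(y_1-a)^n$, $Y_2g=ic(n+1)(y_1-a)^n$ and $Y_2h=-ic(n+1)(y_1-a)^n$. Setting $\alpha=\alpha_\infty$ and $\beta=\beta_\infty$ collapses the relevant combinations: since $\alpha_\infty h(1-L)+\beta_\infty g(1+L)=\frac{1-L^2}{2n+2}(g+h)$ with $g+h=2c(y_1-a)^{n+1}$, and likewise $\alpha_\infty h(1-L)-\beta_\infty g(1+L)=\frac{1-L^2}{2n+2}(h-g)$ with $h-g=-2i(n+1)(y_2-b)$, one obtains
\[
\xi_1=c^2(1-L^2)(y_1-a)^{2n+1}g^{\alpha_\infty-1}h^{\beta_\infty-1},\qquad
\xi_2=c(n+1)(1-L^2)(y_1-a)^n(y_2-b)\,g^{\alpha_\infty-1}h^{\beta_\infty-1}.
\]
In other words $\xi$ is the real scalar $c(1-L^2)(y_1-a)^n f_{\infty,L}/(gh)$ times the vector $\big(c(y_1-a)^{n+1},(n+1)(y_2-b)\big)$. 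Because $\alpha_\infty,\beta_\infty$ are real and $h=\overline g$ (so $gh>0$), the norm reduces to $\|\xi\|^2=\xi_1\overline{\xi_1}+\xi_2\overline{\xi_2}=c^2(1-L^2)^2(y_1-a)^{2n}(gh)^{-n/(n+1)}$, which is also the $\tp\to\infty$ specialization of the norm formula in the earlier theorem once one uses $\alpha_\infty+\beta_\infty-1=-\frac{n}{n+1}$.

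Next I would differentiate $\|\xi\|^2$, writing $gh=c^2(y_1-a)^{2n+2}+(n+1)^2(y_2-b)^2$, so that $Y_1(gh)=2c^2(n+1)(y_1-a)^{2n+1}$ and $Y_2(gh)=2c(n+1)^2(y_1-a)^n(y_2-b)$. Using the identity $gh-c^2(y_1-a)^{2n+2}=(n+1)^2(y_2-b)^2$ gives
\[
Y_1\|\xi\|^2=2n\,c^2(1-L^2)^2(n+1)^2(y_1-a)^{2n-1}(y_2-b)^2(gh)^{-n/(n+1)-1},\quad
Y_2\|\xi\|^2=-2n\,c^3(1-L^2)^2(n+1)(y_1-a)^{3n}(y_2-b)(gh)^{-n/(n+1)-1}.
\]
Substituting these and the formulas for $\xi_1,\xi_2$ into $(Y_1\|\xi\|^2)\xi_1+(Y_2\|\xi\|^2)\xi_2$, each of the two products equals $\pm\,2n\,c^4(1-L^2)^3(n+1)^2(y_1-a)^{4n}(y_2-b)^2(gh)^{-n/(n+1)-2}f_{\infty,L}$ with opposite signs, so the sum vanishes on $\mathbb{G}_n\setminus\{(a,b)\}$. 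Equivalently, the cancellation is forced by the identity $Y_1(gh)\xi_1+Y_2(gh)\xi_2=2c(n+1)(y_1-a)^n(gh)\,\xi_1/\big(c(y_1-a)^{n+1}\big)$ together with $n-\frac{n}{n+1}(n+1)=0$, so the exponent $\frac{n}{n+1}$ is doing the work.

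Finally, smoothness of $f_{\infty,L}$ on $\mathbb{G}_n\setminus\{(a,b)\}$ is clear because $g$ and $h$ vanish only at $(a,b)$, and $|f_{\infty,L}|=|g|^{\alpha_\infty}|h|^{\beta_\infty}=(gh)^{1/(2n+2)}\to 0$ as $(y_1,y_2)\to(a,b)$ since $n\ge 1$; setting $f_{\infty,L}(a,b)=0$ then yields the continuous extension and the stated Dirichlet problem is solved. I expect the only real friction to be bookkeeping: tracking the complex exponents in $\xi_1,\xi_2$ and isolating the correct power of $gh$ in $\|\xi\|^2$. Once $\|\xi\|^2$ is pinned down as $c^2(1-L^2)^2(y_1-a)^{2n}(gh)^{-n/(n+1)}$, the vanishing is a one-line consequence of the arithmetic of the exponent.
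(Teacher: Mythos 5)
Your proof is correct and takes essentially the same route as the paper: the paper's own argument is precisely the direct formal computation of $\xi_1$, $\xi_2$, $\|\xi\|^2$, then $Y_1\|\xi\|^2$ and $Y_2\|\xi\|^2$, after which the two products $(Y_1\|\xi\|^2)\xi_1$ and $(Y_2\|\xi\|^2)\xi_2$ cancel exactly, and all of your intermediate quantities agree with the paper's. Your added observations (the role of $\alpha_\infty+\beta_\infty=\tfrac{1}{n+1}$ and the verification that $|f_{\infty,L}|=(gh)^{1/(2n+2)}\to 0$ at $(a,b)$) are correct refinements of details the paper leaves implicit.
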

\begin{proof}
We may prove this theorem by letting $\tp\to\infty$ in Equation \eqref{gpart1} and invoking continuity (cf. Corollary \ref{gsmooth}). For completeness, though, we compute formally. 

We first make the following definitions:
\begin{eqnarray*}
A  \equiv  \frac{1+L}{2n+2} & \textmd{and} & 
B  \equiv  \frac{1-L}{2n+2} 
\end{eqnarray*}
so that we may compute
\begin{eqnarray*}
Y_1f & = & c(n+1)(y_1-a)^{n}g^{A-1}h^{B-1}(A h+B g)\\
Y_2f & = & ic(n+1)(y_1-a)^{n}g^{A-1}h^{B-1}(A h-B g)\\
\xi_1=Y_1f+iLY_2f & = & c(n+1)(y_1-a)^{n}g^{A-1}h^{B-1}\big(A h(1-L) +B g(1+L)\big)  \\
 & = & c^2 (1-L^2)(y_1-a)^{2n+1}g^{A-1}h^{B-1} \\
\overline{Y_1f+iLY_2f} & = & c(n+1)(y_1-a)^{n}h^{A-1}g^{B-1}\big(A g(1-L) +B h(1+L)\big)\\
\xi_2=Y_2f-iLY_1f & = &  ic(n+1)(y_1-a)^{n}g^{A-1}h^{B-1}\big(A h(1-L) -B g(1+L)\big) \\
& = &c(1-L^2)(n+1)(y_1-a)^{n}(y_2-b) g^{A-1}h^{B-1}\\
\overline{Y_2f-iLY_1f} & = &  -ic(n+1)(y_1-a)^{n}h^{A-1}g^{B-1}\big(A g(1-L) -B h(1+L)\big)\\
\|\xi\|^2= \bigg\| \begin{array}{c}
 Y_1f+iLY_2f\\
 Y_2f-iLY_1f
\end{array} \bigg\|^{2}
& = & c^2(y_1-a)^{2n}g^{A+B-1}h^{A+B-1}(1-L^2)^2. 
\end{eqnarray*}
We then have 
\begin{eqnarray*}
Y_1\|\xi\|^2 & = & 2 c^2(1-L^2)^2n(n+1)^2(y_1-a)^{2n-1}(y_2-b)^2(gh)^{\frac{-1-2n}{n+1}} \\
\textmd{and}\ \ Y_2\|\xi\|^2 & = & -2 c^3(1-L^2)^2n(n+1)(y_1-a)^{3n}(y_2-b)(gh)^{\frac{-1-2n}{n+1}}
\end{eqnarray*}
so that 
\begin{eqnarray*}
Y_1\|\xi\|^2 \xi_1 & = &2 c^4(1-L^2)^3n(n+1)^2(y_1-a)^{4n}(y_2-b)^2(gh)^{\frac{-1-2n}{n+1}}g^{A-1}h^{B-1}\\
\textmd{and}\ \ Y_2\|\xi\|^2 \xi_2 & = &  -2 c^4(1-L^2)^3n(n+1)^2(y_1-a)^{4n}(y_2-b)^2(gh)^{\frac{-1-2n}{n+1}} g^{A-1}h^{B-1}.
\end{eqnarray*}
The theorem follows. 
\end{proof}
We notice that when $L=0$, this result is already well-known as Corollary 3.2 in \cite{BG}. 
In particular, combined with \cite{BG}, we have shown the following diagram commutes in $\mathbb{G}_n\setminus\{(a,b)\}$:
$$\begin{CD}
\overline{\Delta_{\tp}}f_{\tp,L}=0 @>>{\tp\to\infty}>\overline{\Delta_{\infty}}f_{\infty,L}=0 \\
@VV{L\to 0}V                @VV{L\to 0}V \\
\Delta_{\tp}f_{\tp,0}=0 @>>{\tp\to\infty}> \Delta_{\infty}f_{\infty,0}=0
\end{CD}$$
\subsection{Heisenberg group}
We recall that in $\mathbb{H}^n$, the vector $\Upsilon$ has components 
\begin{eqnarray*}
\Upsilon_k= \begin{cases}
X_ku+iLX_{n+k}u & \textmd{when $k=1,2,\ldots, n$} \\
X_{k}u-iLX_{k-n}u & \textmd {when $k=n+1, n+2, \ldots, 2n$}.
\end{cases}
\end{eqnarray*}
so that we have 
\begin{eqnarray*}
\overline{\Delta_{\tp}}u=\div (\| \Upsilon\|^{\tp-2}\Upsilon  )=0
\end{eqnarray*}
in $\mathbb{H}^n\setminus\{0\}$. 
As in the Grushin case, we formally let $\tp \to \infty$ and obtain via Equation \eqref{hform} 
\begin{equation*}
\overline{\Delta_{\infty}}u= \sum_{j=1}^{2n}(X_j\| \Upsilon  \|^{2} )\Upsilon _j.
\end{equation*}
Also formally letting $\tp \to \infty$, produces the function 
\begin{equation*}
u_{\infty,L}(x_1,x_2,\ldots,x_{2n},z) =
v(x_1,x_2,\ldots,x_{2n},z)^{\frac{1-L}{4}}w(x_1,x_2,\ldots,x_{2n},z)^{\frac{1+L}{4}}
\end{equation*}
where we recall the functions $v$ and $w$ are given by
\begin{eqnarray*}
v(x_1,x_2,\ldots,x_{2n},z) & = & \big(\sum_{j=1}^{2n}x_1^2\big)-4iz\\ 
w(x_1,x_2,\ldots,x_{2n},z) & = & \big(\sum_{j=1}^{2n}x_1^2\big)+4iz.
\end{eqnarray*}
We have the following theorem.
\begin{theorem}
The function $u_{\infty,L}$, as above, is a smooth solution to the Dirichlet problem
\begin{eqnarray*}
\left\{\begin{array}{cc}
\overline{\Delta_{\infty}}u_{\infty,L}(p)=0 & p \in \mathbb{H}^n\setminus\{(0)\} \\
0 & p = 0.
\end{array}\right.
\end{eqnarray*}
\end{theorem}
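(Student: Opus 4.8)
The plan is to mirror the proof of the Grushin-type result immediately above: first obtain the interior equation $\overline{\Delta_{\infty}}u_{\infty,L}=0$ on $\mathbb{H}^n\setminus\{0\}$ as a limit of the finite-$\tp$ identities already established, then record the self-contained formal computation, and finally treat smoothness and the boundary value $u_{\infty,L}(0)=0$ separately.

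For the limiting argument I would start from \eqref{hform}, which exhibits $\overline{\Delta_{\tp}}u$ as $\|\Upsilon\|^{\tp-4}$ times $\frac12(\tp-2)\sum_{j}X_j\|\Upsilon\|^2\,\Upsilon_j+\|\Upsilon\|^2\sum_{j}X_j\Upsilon_j$, and observe that $\overline{\Delta_{\infty}}u=\sum_{j=1}^{2n}(X_j\|\Upsilon\|^2)\Upsilon_j$ is precisely the $\tp$-free half of that bracket. Since $\eta=\frac{2n+2-\tp}{4(1-\tp)}(1-L)\to\frac{1-L}{4}$ and $\tau=\frac{2n+2-\tp}{4(1-\tp)}(1+L)\to\frac{1+L}{4}$ as $\tp\to\infty$, the vector $\Upsilon$, the scalar $\|\Upsilon\|^2$, and the horizontal derivatives $X_j\|\Upsilon\|^2$ all converge, locally uniformly on compact subsets of $\mathbb{H}^n\setminus\{0\}$ together with their horizontal derivatives, to the corresponding objects built from $u_{\infty,L}$; hence $\sum_{j}(X_j\|\Upsilon\|^2)\Upsilon_j\to\overline{\Delta_{\infty}}u_{\infty,L}$. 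But by \eqref{part1} that sum equals $(4n+2)\frac{(\tp-(2n+2))^3}{(\tp-1)^4}(L^2-1)^3\big(\sum x_j^2\big)^2 v^{\mathcal{M}}w^{\mathfrak{M}}$, whose scalar prefactor is $O(\tp^{-1})$, so the limit is $0$. Combined with the smoothness claim, this establishes the interior equation, just as Corollary \ref{hsmooth} did for finite $\tp$.

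For the formal version I would set $A\equiv\frac{1-L}{4}$ and $B\equiv\frac{1+L}{4}$, so $u_{\infty,L}=v^Aw^B$ with the two algebraic identities $A+B=\frac12$ and $A(1+L)=B(1-L)=\frac{1-L^2}{4}$, and compute the horizontal derivatives exactly as in the finite-$\tp$ case but in the limit. Writing $R=\sum_{j=1}^{2n}x_j^2$, $v=R-4iz$, $w=R+4iz$, one finds $X_jR=2x_j$ and $X_j(vw)=4P_j$, where $P_k=Rx_k-4zx_{n+k}$ for $k\le n$ and $P_{n+k}=Rx_{n+k}+4zx_k$; the two identities above collapse each component to $\Upsilon_j=(1-L^2)\,v^{A-1}w^{B-1}P_j$, and, using $\overline v=w$ together with $\sum_{k=1}^n(P_k^2+P_{n+k}^2)=R(vw)$, the Hermitian norm becomes $\|\Upsilon\|^2=(1-L^2)^2R(vw)^{-1/2}$. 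Then $X_j\|\Upsilon\|^2=(1-L^2)^2\big(2x_j(vw)^{-1/2}-2RP_j(vw)^{-3/2}\big)$, so $\sum_j(X_j\|\Upsilon\|^2)\Upsilon_j=2(1-L^2)^3v^{A-1}w^{B-1}\big((vw)^{-1/2}\sum_j x_jP_j-R(vw)^{-3/2}\sum_j P_j^2\big)$, and the computation closes on the two elementary identities $\sum_j x_jP_j=R^2$ and $\sum_j P_j^2=R(vw)$, which make the bracket vanish identically.

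Finally I would verify that $u_{\infty,L}$ really is a single-valued smooth function on $\mathbb{H}^n\setminus\{0\}$ with the stated boundary value. The polynomials $v$ and $w$ vanish only at the origin (since $v=0$ forces $R=0$ and $z=0$), and their common range on $\mathbb{H}^n\setminus\{0\}$ is contained in the closed right half-plane with the origin deleted — a simply connected set on which the principal logarithm is smooth — so the non-integer powers $v^A$, $w^B$ are unambiguously defined and smooth (in the Euclidean, hence sub-Riemannian, sense) there; moreover $|u_{\infty,L}|=|v|^A|w|^B=(R^2+16z^2)^{1/4}\to0$ as $p\to0$, giving continuity up to the origin with value $0$. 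The arithmetic steps are routine limits of what has already been verified for finite $\tp$; the only step that is genuinely different in kind — and hence the part I would be most careful about — is this last paragraph, namely fixing a branch so that $v^Aw^B$ is a well-defined smooth function and controlling its decay at the origin.
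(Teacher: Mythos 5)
Your proposal is correct and follows essentially the same route as the paper: the limiting argument from Equations \eqref{hform} and \eqref{part1} plus the self-contained formal computation, which you merely repackage via the real vectors $P_j$ and the identities $\sum_j x_jP_j=R^2$, $\sum_j P_j^2=R(vw)$ (the paper carries out the same cancellation componentwise with an index reordering). Your final paragraph on fixing the principal branch of $v^Aw^B$ on the right half-plane and checking $|u_{\infty,L}|=(R^2+16z^2)^{1/4}\to 0$ is a welcome extra that the paper leaves implicit, but it does not change the substance of the argument.
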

\begin{proof}
We may prove this theorem by letting $\tp\to\infty$ in Equation \eqref{part1} and invoking continuity (cf. Corollary \ref{hsmooth}). For completeness, though, we compute formally. 

We first make the following definitions:
\begin{eqnarray*}
A  \equiv  \frac{1+L}{4} & \textmd{and} & 
B  \equiv  \frac{1-L}{4} 
\end{eqnarray*}
so that we may compute
We then have $k=1,2,\ldots, n$, 
\begin{eqnarray*}
\Upsilon_k=X_ku+iLX_{n+k}u & = & 2v^{B-1}w^{A-1}\bigg((B w+ A v)(x_k+iLx_{n+k})+(B w- A v)(Lx_k+ix_{n+k})\bigg)\\
\textmd{and} \ \ \ \overline{X_ku+iLX_{n+k}u} & = & 2w^{B-1}v^{A-1}\bigg((B v+ A w)(x_k-iLx_{n+k}) +(B v- A w)(Lx_k-ix_{n+k})\bigg)
\end{eqnarray*}
and for $l=n+1, n+2, \ldots, 2n$, we have
\begin{eqnarray*}
\Upsilon_l=X_lu-iLX_{l-n}u & = & 2v^{B-1}w^{A-1}\bigg((B w+ A v)(x_l-iLx_{l-n})+(B w- A v)(Lx_{l}-ix_{l-n})\bigg)\\
\textmd{and} \ \ \ \overline{X_lu-iLX_{l-n}u} & = & 2w^{B-1}v^{\tau-1}\bigg((B v+ A w)(x_l+iLx_{l-n})+(B v- A w)(Lx_{l}+ix_{l-n})\bigg)
\end{eqnarray*}
and so 
\begin{equation*}
\| \Upsilon\|^{2}
= ( L^2-1)^2\Big(\sum_{j=1}^{2n}x_j^2 \Big)(vw)^{-\frac{1}{2}}.
\end{equation*}
For $k=1,2,\ldots, n$, 
\begin{eqnarray*}
X_k\| \Upsilon\|^{2}
 & = & ( L^2-1)^2\Big(2x_k (vw)^{-\frac{1}{2}}-2x_k\Big(\sum_{j=1}^{2n}x_j^2\Big)^2 (vw)^{-\frac{3}{2}} \Big)+8( L^2-1)^2zx_{n+k}(vw)^{-\frac{3}{2}}\sum_{j=1}^{2n}x_j^2 \\
  & = & 8z( L^2-1)^2(vw)^{-\frac{3}{2}} \Big (4z x_k +x_{n+k}\sum_{j=1}^{2n}x_j^2\Big) 
 \end{eqnarray*}
and for $l=n+1, n+2, \ldots, 2n$, we have
\begin{eqnarray*}
X_l\| \Upsilon\|^{2}
 & = & ( L^2-1)^2 \Big(2x_l (vw)^{-\frac{1}{2}}-2x_l\Big(\sum_{j=1}^{2n}x_j^2\Big)^2 (vw)^{-\frac{3}{2}} \Big)-8( L^2-1)^2zx_{l-n}(vw)^{-\frac{3}{2}}\sum_{j=1}^{2n}x_j^2 \\
  & = & 8z( L^2-1)^2(vw)^{-\frac{3}{2}}  \Big(4z x_l -x_{l-n}\sum_{j=1}^{2n}x_j^2\Big)_. 
 \end{eqnarray*}
For $k=1,2,\ldots, n$ this yields
\begin{eqnarray}
X_k\| \Upsilon\|^{2}\Upsilon_k & = & 4z( L^2-1)^2(vw)^{-\frac{3}{2}} \Big (4z x_k +x_{n+k}\sum_{j=1}^{2n}x_j^2\Big)v^{B-1}w^{A-1} \nonumber \\ 
 & & \times \bigg(((1-L) w+ (1+L) v)(x_k+iLx_{n+k})+((1-L) w- (1+L) v)(Lx_k+ix_{n+k})\bigg) \nonumber \\ 
 & = &4z( L^2-1)^2(vw)^{-\frac{3}{2}} \Big (4z x_k +x_{n+k}\sum_{j=1}^{2n}x_j^2\Big)v^{B-1}w^{A-1} \nonumber \\ 
 & & \times \Bigg((2\Big(\sum_{j=1}^{2n}x_j^2\Big)-8Liz)(x_k+iLx_{n+k}) +
(8iz-2L\sum_{j=1}^{2n}x_j^2) (Lx_k+ix_{n+k})\Bigg) \nonumber \\ 
& = &8z( L^2-1)^3(vw)^{-\frac{3}{2}} \Big (4z x_k +x_{n+k}\sum_{j=1}^{2n}x_j^2\Big)v^{B-1}w^{A-1}  (4zx_{n+k}-x_k\sum_{j=1}^{2n}x_j^2) \label{k}
\end{eqnarray}
and for $l=n+1, n+2, \ldots, 2n$, we have
\begin{eqnarray}
X_l\| \Upsilon\|^{2}\Upsilon_l
 & = & 4z( L^2-1)^2(vw)^{-\frac{3}{2}}  \Big(4z x_l -x_{l-n}\sum_{j=1}^{2n}x_j^2\Big)v^{B-1}w^{A-1}\nonumber \\ 
 & & \times
\bigg(((1-L) w+ (1+L) v)(x_l-iLx_{l-n})+((1-L) w- (1+L) v)(Lx_{l}-ix_{l-n})\bigg)\nonumber \\ 
& = &8z( L^2-1)^3(vw)^{-\frac{3}{2}} \Big (4z x_l -x_{l-n}\sum_{j=1}^{2n}x_j^2\Big)v^{B-1}w^{A-1} \nonumber \\ 
 & & \times (-4zx_{l-n}-x_l\sum_{j=1}^{2n}x_j^2) \label{l}
\end{eqnarray}
Combining Equations \eqref{k} and \eqref{l} along with an index reordering produces
\begin{equation*}
\sum_{j=1}^{2n}X_j(\| \Upsilon\|^{2})\Upsilon_j =
8z( L^2-1)^3(vw)^{-\frac{3}{2}} v^{B-1}w^{A-1}\times 0=0. 
\end{equation*}
\end{proof}
We notice that when $L=0$, this result was a part of the Ph.D. thesis of the first author \cite{BT}. 
In particular, combined with \cite{BT, B:HG}, we have shown the following diagram commutes in $\mathbb{H}^n\setminus\{0\}$:
$$\begin{CD}
\overline{\Delta_{\tp}}u_{\tp,L}=0 @>>{\tp\to\infty}>\overline{\Delta_{\infty}}u_{\infty,L}=0 \\
@VV{L\to 0}V                @VV{L\to 0}V \\
\Delta_{\tp}u_{\tp,0}=0 @>>{\tp\to\infty}> \Delta_{\infty}u_{\infty,0}=0
\end{CD}$$


\begin{thebibliography}{99}
\bibitem{BR:SRG}Bell\"{a}che, Andr\'{e}. The Tangent Space in Sub-Riemannian Geometry. In Sub-Riemannian Geometry; Bell\"{a}che, Andr\'{e}., Risler, Jean-Jacques., Eds.; Progress in Mathematics; Birkh\"{a}user:
Basel, Switzerland. \textbf{1996}, Vol. 144, 1–-78.
\bibitem{BGG}Beals, Richard.; Gaveau, Bernard.; Greiner, Peter. On a Geometric Formula for the Fundamental Solution of Subelliptic Laplacians. Math. Nachr. \textbf{1996}, \emph{181}, 81--163
\bibitem{BT}Bieske, Thomas; Lipschitz extensions on the Heisenberg group. Thesis (Ph.D.), University of Pittsburgh. \textbf{1999} 66 pages ISBN: 978-0599-27771-7, ProQuest LLC, Thesis.  
\bibitem{B:HG}Bieske, Thomas. On Infinite Harmonic Functions on the Heisenberg Group. Comm. in PDE. \textbf{2002}, \emph{27} (3\&4), 727--762. 
\bibitem{BG}Bieske, Thomas.; Gong, Jason. The p-Laplacian Equation on a Class of Grushin-Type Spaces. Amer. Math. Society. \textbf{2006}, \emph{134}, 3585--3594
\bibitem{CDG}Capogna, Luca.; Danielli, Donatella.; Garofalo, Nicola. Capacitary Estimates and the Local Behavior of Solutions of Nonlinear Subelliptic Equations. Amer. J. of Math. \textbf{1997}, \emph{118}, 1153-–1196. 
\bibitem{F:SE}Folland, G.B. Subelliptic Estimates and Function Spaces on Nilpotent Lie Groups.Ark. Mat. \textbf{1975}, \emph{13}, 161--207.
\bibitem{FS:HSHG}Folland, G.B.; Stein, Elias M.
\emph{Hardy Spaces on Homogeneous Groups};
Princeton University Press: Princeton, NJ. 1982.
\bibitem{G:MS}Gromov, Mikhael.
\emph{Metric Structures for Riemannian and  Non-Riemannian  Spaces};
Birkhäuser Boston Inc: Boston, 1999. 
\bibitem{H:CCG}Heinonen, Juha.; Calculus on Carnot groups. Fall School in Analysis. Report no. 68, Univ. of Jyv\"{a}skyl\"{a}, Jyv\"{a}skyl\"{a}, Finland, \textbf{1995}
\bibitem{HH}Heinonen, Juha.; Holopainen, Ilkka.; Quasiregular Maps on Carnot groups. J. Geom. Anal. \textbf{1997} \emph{7} (1), 109-–148. 
\bibitem{K:LGHT}Kaplan, Aroldo.
Lie Groups of Heisenberg Type.
Rend. Sem. Mat. Univ. Politec. Torino 1983 Special Issue \textbf{1984}, 117--130.
\bibitem{St:HA}Stein, Elias M. 
\emph{Harmonic Analysis};
Princeton University Press: Princeton, NJ. 1993.
\end{thebibliography}
\end{document}